\numberwithin{equation}{section}
\newcommand{\To}{\rightarrow}
\newcommand{\Ss}{\mathcal{S} }
\newcommand{\Min}{{\rm Min}}
\newcommand{\Supp}{{\rm Supp}}
\newcommand{\Hom}{{\rm{Hom}}}
\newcommand{\Ext}{{\rm{Ext}}}
\newtheorem{theorem}{Theorem}[section]
\newtheorem{corollary}[theorem]{Corollary}
\newtheorem{lemma}[theorem]{Lemma}
\newtheorem{proposition}[theorem]{Proposition}
\theoremstyle{definition}
\newtheorem{definitions}[theorem]{Definitions}
\newtheorem{definitions and notations}[theorem]{Definitions and Notations}
\newtheorem{examples}[theorem]{Examples}
\theoremstyle{plain}
\theoremstyle{definition}
\numberwithin{equation}{section}
\begin{document}

\title{Melkersson condition on Serre subcategories}
\author{Reza Sazeedeh and Rasul Rasuli}

\address{Department of Mathematics, Urmia University, P.O.Box: 165, Urmia, Iran-And\\
School of Mathematics, Institute for Research in Fundamental
Sciences (IPM), P. O. Box: 19395-5746, Tehran, Iran}
\email{rsazeedeh@ipm.ir}
\address{Mathematics Department, Faculty of Science, Payame Noor University(PNU), Tehran, Iran}
\email{rasulirasul@yahoo.com}

\subjclass[2000]{13C60, 13D45}

\keywords{Serre subcategory, Melkersson condition, local
cohomology.}

\begin{abstract}
Let $R$ be a commutative noetherian ring, let $\frak a$ and $\frak
b$ be two ideals of $R$; and let $\Ss$ be a Serre subcategory of
$R$-modules. We give a necessary and sufficient condition by which
$\Ss$ satisfies $C_{\frak a}$ and $C_{\frak b}$ conditions. As an
conclusion we show that over a artinian local ring, every Serre
subcategory satisfies $C_{\frak a}$ condition. We also show that
$\Ss_{\frak a}$ is closed under extension of modules. If $\Ss$ is a
torsion subcategory, we prove that $S$ satisfies $C_{\frak a}$
condition. We prove that $C_{\frak a}$ condition can be transferred
via rings homomorphism. As some applications, we give several
results concerning with Serre subcategories in local cohomology
theory.
\end{abstract}

\maketitle

\section{introduction}
\hspace{0.4cm} Throughout this paper, $R$ is a commutative
noetherian ring. We denote by $R$-Mod the category of $R$-modules of
$R$-homomorphisms and also we denote by $R$-mod the full subcategory
of finitely generated $R$-modules. All subcategories considered in
this paper are full subcategories of $R$-Mod; unless otherwise
stated. A subcategory $\Ss$ of $R$-Mod is called Serre if it is
closed under taking submodules, quotients and extensions of modules.

Let $S$ be a Serre subcategory; $\frak a$ an ideal of $R$; $M$ an
$R$-module and $n\in\mathbb{N}$.  It is a natural question to ask
when the local cohomology modules $H_{\frak a}^i(M)$ belongs to
$\Ss$ for all $i<n$ (or for all $i>n$). The known examples of $\Ss$
in this area are $R$-mod and $R$-art, where $R$-art is the
subcategory of artinian $R$-modules. The same questions can be
arisen for graded local cohomology modules $H_{R_+}^i(M)$, where $R$
is a graded ring, $R_+$ is irrelevant ideal, $M$ is a graded modules
and $i$ is a non-negative integer. In the case of graded local
cohomology, dealing with these questions plays an important role in
measuring the number of minimal generators of the components of
graded local cohomolgy modules (cf. [BFT, BRS, S]).

The authors in [AM] gave an answer when $\Ss$ satisfies $C_{\frak
a}$ condition where $\frak a$ is an ideal of $R$. This condition had
already been posed for the subcategory of artinian modules by L.
Melkersson [M]. We notice that this condition can easily be
satisfied on a Serre subcategory whenever it is closed under
injective envelops, but T. Yoshizawa [Y] gave an example which shows
that the converse
 is not valid in general.

Let $\frak a$ be an ideal of $R$. In this paper we are interested in
the study of $C_{\frak a}$ condition for subcategories in some more
general cases. Let $\Ss$ be a subcategory and let $\frak a$ be an
ideal of $R$. We show that if
 $\Ss$ is closed under taking submodules and $\Ss$ satisfies $C_{\sqrt{\frak a}}$ condition, then it
 satisfies $C_{\frak a}$ condition. Moreover we show that, the converse holds if
 $S$ is Serre. Let $\frak b$ be another ideal of $R$. We show that
 if a subcategory $\Ss$ satisfies $C_{\frak a}$ and $C_{\frak b}$ conditions, then
 it satisfies $C_{\frak a+\frak b}$ condition. In case where $\Ss$ is Serre, we find a necessary and sufficient condition by which $\Ss$ satisfies
$C_{\frak a}$ and $C_{\frak b}$ conditions.  As a conclusion, we
prove that $\Ss$ satisfies $C_{\frak a}$ condition whenever $\Ss$
 satisfies $C_{\frak p}$ condition for each minimal $\frak
 p\in\Min(\frak a)$. Furthermore, we show that over an artinian ring, every Serre subcategory satisfies $C_{\frak a}$ condition
 for every ideal $\frak a$ of $R$.

 Let $\Ss_1$ and $\Ss_2$ be two Serre subcategories, let
$\frak a$ be an ideal of $R$. If $<\Ss_1,\Ss_2>$ and
$\Ss_1\cap\Ss_2$ satisfy $C_{\frak a}$ condition, then we show that
$\Ss_1$ and $\Ss_2$ satisfy $C_{\frak a}$ condition too. We also
show that $\Ss_{\frak a}$ is closed under taking extension of
modules for every Serre subcategory $\Ss$. Furthermore, we prove
that if $\Ss$ is a subcategory which is closed under taking
submodules and arbitrary direct sums, then $\Ss_{\frak a}$ is closed
under arbitrary direct sums for each ideal $\frak a$ of $R$; in
particular, if $\Ss$ is a torsion subcategory, then $\Ss$ satisfies
$C_{\frak a}$ condition. Lastly, we show that $C_{\frak a}$
condition can be transferred via rings homomorphisms $\varphi:R\To
S$ where $\frak a$ is an ideal of $R$ (cf. Theorems 2.18, 2.19).

\section{The main results}
We start this section by the following definitions.
\begin{definitions}
Let $\Ss$ be a class of $R$-Mod, let $M$ be an $R$-module and let
$\frak a$ be an ideal of $R$. The class $\Ss$ is said to satisfy
$C_{\frak a}$ {\it condition on} $M$ whenever $\Gamma_{\frak
a}(M)=M$ and $(0:_{\frak a}M)\in\Ss$ imply $M\in\Ss$.

Let $\mathcal{D}$ be a class of $R$-modules. The class $\Ss$ is said
to satisfy $C_{\frak a}$ {\it condition on} $\mathcal{D}$ whenever
$\Ss$ satisfies $C_{\frak a}$ condition on $M$ for every
$M\in\mathcal{D}$.

We denote by $\Ss_{\frak a}$ the largest subclass of $R$-Mod such
that $\Ss$ satisfies $C_{\frak a}$ condition on $\Ss_{\frak a}$. It
is clear to see that $\Ss\subseteq\Ss_{\frak a}$.

The class $\Ss$ is said to satisfy $C_{\frak a}$ {\it condition}
whenever $\Ss_{\frak a}= R$-Mod and $\Ss$ is said to be {\it closed
under $C_{\frak a}$ condition} whenever $\Ss_{\frak a}=\Ss$.
\end{definitions}

In order to illustrate the above definitions and more understanding,
we give several examples of subcategories $\Ss$.
\begin{examples}
(i) Let $R$ be domain and let $\Ss_{tf}$ be the class of
torsion-free modules. Then $\Ss_{tf}$ satisfies $C_{\frak a}$
condition for each ideal $\frak a$ of $R$. Indeed, the case $\frak
a=0$ is clear. For each non-zero ideal $\frak a$ of $R$, if
$\Gamma_{\frak a}(M)=M$ and $(0:_M{\frak a})\in\Ss$, it is immediate
to see that $(0:_M{\frak a})=\Gamma_{\frak a}(M)=0$. Furthermore,
 let $\Ss_{tors}$ be the class of torsion modules. Then it is evident to see that $\Ss$ satisfies $C_{\frak a}$
condition for each ideal $\frak a$ of $R$.

 (ii) Let $\Ss$ be a Serre subcategory of $R$-mod. It follows from
 [Y, Proposition 4.3] that $R$-mod$\subseteq\Ss_{\frak a}$ for
 every ideal $\frak a$ of $R$.

 (iii) Let $(R,\frak m)$ be a local ring and let $\Ss=R$-mod. Then $E(R/\frak m)\in\Ss_{\frak m}$ if and only $R$ is artinian. To be more
precise, suppose that $E(R/\frak m)\in\Ss_{\frak m}$. Since
$\Gamma_{\frak m}(E(R/\frak m))=E(R/\frak m)$ and $\Hom_R(R/\frak
m,E(R/\frak m))\cong R/\frak m\in\Ss$, the module $E(R/\frak m)$ is
finitely generated and so $R$ is artinian. Conversely if $R$ is
artinian, then $E(R/\frak m)\in\Ss\subseteq \Ss_{\frak m}$.
\end{examples}

\medskip

We state the following proposition which gives some basic properties
of $C_{\frak a}$ condition of classes of modules, where $\frak a$ is
an ideal of $R$.
\medskip

\begin{proposition}
Let $\Ss$,$\mathcal{T}$ and $\mathcal{U}$ be three classes of
$R$-modules such that $\Ss\subseteq\mathcal{T}$, let $\frak a$ be an
ideal of $R$ and let $\Ss$ satisfy $C_{\frak a}$ condition on
$\mathcal{T}$. Then the following statements hold.

{\rm (i)} If $\mathcal{T}$ satisfies $C_{\frak a}$ condition on
$\mathcal{U}$, then $\Ss$ satisfies $C_{\frak a}$ condition on
$\mathcal{U}$.

{\rm (ii)} There is $\mathcal{T}_{\frak a}\subseteq\Ss_{\frak a}$.
Moreover, $\Ss_{\frak a}=\bigcup_{\mathcal{T}}\mathcal{T}_{\frak
a}$, where $\mathcal{T}$ is taken over
$$\sum=\{\mathcal{T}|\hspace{0.1cm} \Ss\subseteq \mathcal{T} {\rm
and\hspace{0.1cm}} \Ss {\hspace{0.1cm}\rm satisfies\hspace{0.1cm}}
C_{\frak a} \hspace{0.1cm}{\rm condition\hspace{0.1cm}
on\hspace{0.1cm}}\mathcal{T}\}.$$

{\rm (iii)} $\Ss_{\frak a}$ is closed under $C_{\frak a}$ condition.
\end{proposition}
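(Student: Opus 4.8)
The plan is to reduce the self-referential definition of $\Ss_{\frak a}$ to a pointwise membership test and then verify $(\Ss_{\frak a})_{\frak a}=\Ss_{\frak a}$ by a short formal argument.

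First I would record a pointwise description of the operation $\CC\mapsto\CC_{\frak a}$, valid for any class $\CC$ of $R$-modules. Since $C_{\frak a}$ condition on a class $\DD$ is a requirement imposed on each module of $\DD$ separately, the collection of classes on which $\CC$ satisfies $C_{\frak a}$ condition is closed under arbitrary unions; hence its largest member $\CC_{\frak a}$ is the union of all of them, which is exactly
\[
\CC_{\frak a}=\{M\in R\text{-Mod}:\ \Gamma_{\frak a}(M)=M \text{ and } (0:_{\frak a}M)\in\CC \ \Rightarrow\ M\in\CC\}.
\]
Equivalently, $M\notin\CC_{\frak a}$ holds precisely when $\Gamma_{\frak a}(M)=M$ and $(0:_{\frak a}M)\in\CC$ but $M\notin\CC$. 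I would also keep at hand the containment $\CC\subseteq\CC_{\frak a}$ noted right after the definition.

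To prove the statement, namely $(\Ss_{\frak a})_{\frak a}=\Ss_{\frak a}$, the inclusion $\Ss_{\frak a}\subseteq(\Ss_{\frak a})_{\frak a}$ is immediate from the general containment $\DD\subseteq\DD_{\frak a}$ applied to $\DD=\Ss_{\frak a}$. For the reverse inclusion I would argue by contradiction. Suppose some $M$ lies in $(\Ss_{\frak a})_{\frak a}$ but not in $\Ss_{\frak a}$. Applying the pointwise criterion with $\CC=\Ss$, the failure $M\notin\Ss_{\frak a}$ forces $\Gamma_{\frak a}(M)=M$, $(0:_{\frak a}M)\in\Ss$, and $M\notin\Ss$. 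Now I exploit $\Ss\subseteq\Ss_{\frak a}$: from $(0:_{\frak a}M)\in\Ss$ we get $(0:_{\frak a}M)\in\Ss_{\frak a}$. Thus $M$ is $\frak a$-torsion and satisfies $(0:_{\frak a}M)\in\Ss_{\frak a}$, so the hypotheses of $C_{\frak a}$ condition for the class $\Ss_{\frak a}$ are met on $M$; since $M\in(\Ss_{\frak a})_{\frak a}$, the definition yields $M\in\Ss_{\frak a}$, contradicting $M\notin\Ss_{\frak a}$. Hence $(\Ss_{\frak a})_{\frak a}\subseteq\Ss_{\frak a}$, and equality follows.

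I expect the only real difficulty to be bookkeeping rather than mathematics. The definition of $\Ss_{\frak a}$ is self-referential, with the class occurring on both sides, so the decisive preparatory step is the pointwise reformulation above, which converts ``largest subclass'' into a concrete membership condition. Once that is in place the argument is a routine manipulation of the two nested conditionals, the single genuine input being the inclusion $\Ss\subseteq\Ss_{\frak a}$, used to promote $(0:_{\frak a}M)$ from membership in $\Ss$ to membership in $\Ss_{\frak a}$.
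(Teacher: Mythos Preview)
Your argument for (iii) is correct and is essentially the paper's proof unwrapped: the paper applies part (i) with $\mathcal{T}=\Ss_{\frak a}$ and $\mathcal{U}=(\Ss_{\frak a})_{\frak a}$ to conclude that $\Ss$ satisfies $C_{\frak a}$ on $(\Ss_{\frak a})_{\frak a}$, then invokes maximality of $\Ss_{\frak a}$; you instead inline that transitivity step via your pointwise description and argue by contradiction. The logical core---promoting $(0:_M\frak a)$ from $\Ss$ to $\Ss_{\frak a}$ and then firing the outer $C_{\frak a}$ implication---is identical in both.

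Note, however, that the proposition has three parts and your proposal treats only (iii). Parts (i) and (ii) are not addressed in your write-up; in the paper both are proved directly from the definitions, and in fact your pointwise description of $\CC_{\frak a}$ would dispatch each of them in a line.
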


\begin{proof}
(i) Let $M\in\mathcal{U}$ be an $R$-module such that $\Gamma_{\frak
a}(M)=M$ and $(0:_M{\frak a})\in\Ss$. Since
$\Ss\subseteq\mathcal{T}$ and $\mathcal{T}$ satisfies $C_{\frak a}$
condition on $\mathcal{U}$, there is $M\in\mathcal{T}$ and since
$\Ss$ satisfies $C_{\frak a}$ condition on $\mathcal{T}$, there is
$M\in\Ss$. (ii) Let $M\in\mathcal{T}_{\frak a}$ and let
$M=\Gamma_{\frak a}(M)$ such that $(0:_M\frak a)\in\Ss$. Then
$(0:_M\frak a)\in\mathcal{T}$ and since $M\in\mathcal{T}_{\frak a}$,
there is $M\in\mathcal{T}$. Now, since $\Ss$ satisfies $C_{\frak a}$
condition on $\mathcal{T}$, there is $M\in\Ss$; and hence
$M\in\Ss_{\frak a}$. The second equality follows easily by the first
claim. (iii) In view of the definition it is clear that
$\Ss\subseteq\Ss_{\frak a}\subseteq (\Ss_{\frak a})_{\frak a}$ and
$\Ss$ satisfies $C_{\frak a}$ condition on $\Ss_{\frak a}$ and also
$\Ss_{\frak a}$ satisfies $C_{\frak a}$ condition on $(\Ss_{\frak
a})_{\frak a}$.  Therefore the part (i) implies that $\Ss$ satisfies
$C_{\frak a}$ condition on $(\Ss_{\frak a})_{\frak a}$; and so the
definition implies that $\Ss_{\frak a}=(\Ss_{\frak a})_{\frak a}$.
\end{proof}

The following lemma can be useful in the proof of next results.

\medskip

\begin{lemma}\label{gam}
Let $\frak a$ be an ideal of $R$, let $\mathcal{S}$ be a subcategory
which is closed under taking submodules and let $\Ss$ satisfy
$C_\mathfrak{a}$ condition. If $(0:_M \mathfrak{a})\in \mathcal{S}$,
then $\Gamma_\mathfrak{b}(M)\in \mathcal{S}$ for every ideal $\frak
b$ with $\mathfrak{a}\subseteq \mathfrak{b}$.
\end{lemma}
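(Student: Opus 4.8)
The plan is to put $N=\Gamma_{\frak b}(M)$ and to verify that this single module $N$ meets the two hypotheses of the $C_{\frak a}$ condition, whose conclusion then delivers exactly what we want. Since $\Ss$ satisfies $C_{\frak a}$ condition by assumption (that is, $\Ss_{\frak a}=R$-Mod), it will suffice to check that $\Gamma_{\frak a}(N)=N$ and that $(0:_N\frak a)\in\Ss$; the $C_{\frak a}$ condition applied to $N$ then forces $N=\Gamma_{\frak b}(M)\in\Ss$.

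First I would record the inclusion $\Gamma_{\frak b}(M)\subseteq\Gamma_{\frak a}(M)$, which follows from $\frak a\subseteq\frak b$. Indeed, if $x\in\Gamma_{\frak b}(M)$ then $\frak b^{n}x=0$ for some $n$, and since $\frak a\subseteq\frak b$ gives $\frak a^{n}\subseteq\frak b^{n}$, we get $\frak a^{n}x=0$, so $x\in\Gamma_{\frak a}(M)$. In particular every element of $N=\Gamma_{\frak b}(M)$ is annihilated by a power of $\frak a$, and hence $\Gamma_{\frak a}(N)=N$. This settles the first hypothesis.

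Next I would treat the annihilator condition by a submodule argument. As $N$ is a submodule of $M$, we have $(0:_N\frak a)=\{x\in N:\frak a x=0\}\subseteq(0:_M\frak a)$. By hypothesis $(0:_M\frak a)\in\Ss$, and $\Ss$ is closed under taking submodules, so $(0:_N\frak a)\in\Ss$. With both requirements of the $C_{\frak a}$ condition now verified for $N$, applying that condition yields $\Gamma_{\frak b}(M)\in\Ss$, completing the argument.

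I do not anticipate a genuine obstacle: the proof is a direct verification against the definition of the $C_{\frak a}$ condition. The only point meriting a little care is that the inclusion of torsion functors $\Gamma_{\frak b}\subseteq\Gamma_{\frak a}$ reverses the inclusion $\frak a\subseteq\frak b$ of ideals; one must apply it in this direction so that $N$ really lands inside $\Gamma_{\frak a}(M)$ and so that its full $\frak a$-torsion is all of $N$.
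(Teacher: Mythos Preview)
Your proof is correct and follows essentially the same approach as the paper: set $N=\Gamma_{\frak b}(M)$, check $\Gamma_{\frak a}(N)=N$ and $(0:_N\frak a)\in\Ss$, then invoke the $C_{\frak a}$ condition. The only cosmetic difference is that the paper identifies $(0:_{\Gamma_{\frak b}(M)}\frak a)=\Gamma_{\frak b}((0:_M\frak a))$ to see it as a submodule of $(0:_M\frak a)$, whereas you use the inclusion $N\subseteq M$ directly; both are equivalent justifications.
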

\begin{proof}
It is clear that $\Gamma_\mathfrak{a}(\Gamma_\mathfrak{b}(M))=
\Gamma_\mathfrak{b}(M)$ and
$(0:_{\Gamma_\mathfrak{b}(M)}\mathfrak{a})=
\Gamma_{b}((0:_M\mathfrak{a}))$. Since $\Ss$ is closed under taking
submodules, we have $(0:_{\Gamma_\mathfrak{b}(M)}\mathfrak{a})\in
\mathcal{S}$ and since $\Ss$ satisfies $C_{\frak a}$ condition, we
have $\Gamma_\mathfrak{b}(M)\in \mathcal{S}$.
\end{proof}

We now show that for every ideal $\frak a$ of $R$, a Serre
subcategory satisfies $C_{\frak a}$ condition if and only if it
satisfies $C_{\sqrt{\frak a}}$ condition.

\medskip

\begin{proposition}\label{rad}
Let $\Ss$ be a subcategory which is closed under taking submodules.
If $\mathcal{S}$ satisfies $C_{\sqrt{\mathfrak{a}}}$ condition, then
it satisfies $C_\mathfrak{a}$ condition. Moreover, if $\Ss$ is
Serre, then the converse holds too.
\end{proposition}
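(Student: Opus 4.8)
The plan is to prove the two implications separately, using throughout two elementary facts: first, the torsion functor depends only on the radical, so $\Gamma_{\mathfrak a}(M)=\Gamma_{\sqrt{\mathfrak a}}(M)$ for every $M$; and second, the inclusion $\mathfrak a\subseteq\sqrt{\mathfrak a}$ yields $(0:_M\sqrt{\mathfrak a})\subseteq(0:_M\mathfrak a)$. Since $R$ is noetherian, I will also use that there is an integer $k$ with $(\sqrt{\mathfrak a})^k\subseteq\mathfrak a$, and that $\sqrt{\mathfrak a}$ is finitely generated.

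For the forward implication only closure under submodules is needed. Given $M$ with $\Gamma_{\mathfrak a}(M)=M$ and $(0:_M\mathfrak a)\in\Ss$, I note $\Gamma_{\sqrt{\mathfrak a}}(M)=\Gamma_{\mathfrak a}(M)=M$, while $(0:_M\sqrt{\mathfrak a})$ is a submodule of $(0:_M\mathfrak a)\in\Ss$, hence lies in $\Ss$. Then the $C_{\sqrt{\mathfrak a}}$ condition forces $M\in\Ss$. This direction is routine.

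For the converse, assume $\Ss$ is Serre and satisfies $C_{\mathfrak a}$, and take $M$ with $\Gamma_{\sqrt{\mathfrak a}}(M)=M$ and $(0:_M\sqrt{\mathfrak a})\in\Ss$. Since $\Gamma_{\mathfrak a}(M)=M$, the $C_{\mathfrak a}$ condition shows it is enough to prove $N:=(0:_M\mathfrak a)\in\Ss$. Here $\mathfrak a N=0$ together with $(\sqrt{\mathfrak a})^k\subseteq\mathfrak a$ gives $(\sqrt{\mathfrak a})^kN=0$, and $(0:_N\sqrt{\mathfrak a})=(0:_M\sqrt{\mathfrak a})\in\Ss$. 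So the whole converse reduces to the following claim, where the real work lies: if $\Ss$ is Serre, $(\sqrt{\mathfrak a})^kN=0$, and $(0:_N\sqrt{\mathfrak a})\in\Ss$, then $N\in\Ss$. I would prove this by induction on $k$, the case $k=1$ being immediate as then $N=(0:_N\sqrt{\mathfrak a})$. For $k\ge 2$, fix generators $b_1,\dots,b_t$ of $\sqrt{\mathfrak a}$ and consider the $R$-linear map $N\to N^t$, $x\mapsto(b_1x,\dots,b_tx)$, whose kernel is exactly $(0:_N\sqrt{\mathfrak a})$ and whose image lies in $(\sqrt{\mathfrak a}N)^t$. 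The module $\sqrt{\mathfrak a}N$ is annihilated by $(\sqrt{\mathfrak a})^{k-1}$ and its socle $(0:_{\sqrt{\mathfrak a}N}\sqrt{\mathfrak a})$ is a submodule of $(0:_N\sqrt{\mathfrak a})\in\Ss$, so the inductive hypothesis gives $\sqrt{\mathfrak a}N\in\Ss$, whence $(\sqrt{\mathfrak a}N)^t\in\Ss$ by closure under finite direct sums. Thus $N/(0:_N\sqrt{\mathfrak a})$ embeds into $(\sqrt{\mathfrak a}N)^t$ and lies in $\Ss$, and the short exact sequence $0\to(0:_N\sqrt{\mathfrak a})\to N\to N/(0:_N\sqrt{\mathfrak a})\to 0$ with both ends in $\Ss$ yields $N\in\Ss$ by closure under extensions.

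The main obstacle, and the only place where the full Serre hypothesis rather than mere closure under submodules is used, is this claim: passing from control of the honest socle $(0:_N\sqrt{\mathfrak a})$ to control of the larger kernel $(0:_N\mathfrak a)$. The device that makes it work is to replace the infinite radical torsion filtration by the finite $(\sqrt{\mathfrak a})$-adic one, available precisely because $(\sqrt{\mathfrak a})^k\subseteq\mathfrak a$, and to embed each successive layer into a finite power of the previous one by multiplication by the generators $b_j$; closure under extensions then reassembles $N$. The remaining points to verify are only the routine identities $\Gamma_{\mathfrak a}=\Gamma_{\sqrt{\mathfrak a}}$ and $(0:_N\sqrt{\mathfrak a})=(0:_M\sqrt{\mathfrak a})$ and the exactness of the displayed sequence.
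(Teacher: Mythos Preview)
Your proof is correct. The forward implication is identical to the paper's. For the converse, both arguments reduce to the same core step --- showing that if $(0:_M\sqrt{\mathfrak a})\in\Ss$ then $(0:_M\mathfrak a)\in\Ss$ --- and both climb the $\sqrt{\mathfrak a}$-adic filtration by induction, using that $(\sqrt{\mathfrak a})^k\subseteq\mathfrak a$ for some $k$. The mechanics of the inductive step differ: the paper applies the contravariant functor $\Hom_R(-,M)$ to the short exact sequences $0\to\mathfrak b^i/\mathfrak b^{i+1}\to R/\mathfrak b^{i+1}\to R/\mathfrak b^i\to 0$ (with $\mathfrak b=\sqrt{\mathfrak a}$), using that $\mathfrak b^i/\mathfrak b^{i+1}$ is a quotient of some $(R/\mathfrak b)^m$, and then handles the passage from $(0:_M\mathfrak b^k)$ to $(0:_M\mathfrak a)$ with one more $\Hom$-sequence coming from $0\to\mathfrak a/\mathfrak b^k\to R/\mathfrak b^k\to R/\mathfrak a\to 0$. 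You instead work covariantly, passing to $N=(0:_M\mathfrak a)$ at the outset and embedding $N/(0:_N\sqrt{\mathfrak a})$ into $(\sqrt{\mathfrak a}N)^t$ via multiplication by the generators $b_1,\dots,b_t$. Your route is slightly more direct in that it avoids the separate final step for the intermediate ideal $\mathfrak a$, while the paper's $\Hom$-formulation makes the role of finite presentation over $R/\mathfrak b$ more visible; substantively the two arguments are equivalent.
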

\begin{proof}
Let $M$ ba an $R$-module such that $\Gamma_{{a}}(M)=M$, and
$(0:_M\mathfrak{a})\in \mathcal{S}$. Then $\Gamma_{\sqrt{a}}(M)=M$
and since $(0:_M\sqrt{a})\subset (0:_M\mathfrak{a})$, the hypothesis
implies that $(0:_M\sqrt{a})\in \mathcal{S}$. Now, this fact that
$\mathcal{S}$ satisfies $C_{\sqrt{a}}$ condition implies that $M\in
\mathcal{S}$. For the converse, let $\Ss$ be a Serre subcategory and
for convenience we set $\frak b=\sqrt{\frak a}$. As $R$ is
noetherian, there exists a non-negative integer $n$ such that $\frak
b^n\subseteq \frak a$. Let $M$ be an $R$-module such that
$\Gamma_{\frak a}(M)=\Gamma_{\frak b}(M)=M$ and $(0:_M{\frak
b})\in\Ss$. Consider the following exact sequence of modules
$$0\To \frak
b/\frak b^2\To R/\frak b^2\To R/\frak b\To 0\hspace{0.2cm}(\dag).$$
 The module $\frak
b/\frak b^2$ is a finitely generated $R/\frak b$-module and so for
some $m\in\mathbb{N}$ there exists the following exact sequence of
$R$-modules $$0\To K\To (R/\frak b)^m\To \frak b/\frak b^2\To 0.$$
Applying the functor $\Hom_R(-,M)$ to this exact sequence, we deduce
that $\Hom_R(\frak b/\frak b^2,M)\in\Ss$. Moreover, applying the
functor $\Hom_R(-,M)$ to the exact sequence $(\dag)$ and using this
fact that $\Ss$ is Serre, we deduce that $(0:_M\frak
b^2)\cong\Hom_R(R/\frak b^2,M)\in\Ss$. Repeating the similar manner
 many times, we get $(0:_M\frak b^n)\in\Ss$. Now, applying the
functor $\Hom_R(-,M)$ to the exact sequence $0\To \frak a/\frak
b^n\To R/\frak b^n\To R/\frak a\To 0$, we get $(0:_M\frak a)\in\Ss$.
Lastly, since $\Ss$ satisfies $C_{\frak a}$ condition, we have
$M\in\Ss$.
\end{proof}
\medskip

\begin{proposition}\label{plus}
Let $\mathfrak{a}$ and $\mathfrak{b}$ be two ideals of $R$ and let
 $\mathcal{S}$ be a subcategory satisfying $C_\mathfrak{a}$ and
$C_\mathfrak{b}$ conditions. Then $\mathcal{S}$ satisfies
$C_{\mathfrak{a}+\mathfrak{b}}$ condition. In particular, if
$\mathcal{S}$ satisfies $C_{\frak a}$ condition for every principal
ideal $\frak a$, then $\Ss$ satisfies $C_{\frak a}$ condition for
every ideal $\frak a$
\end{proposition}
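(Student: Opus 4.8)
The plan is to reduce the statement to the two given hypotheses by passing through the submodule $(0:_M\mathfrak{b})$. First I would record the elementary observation that, since $\mathfrak{a}^k\subseteq(\mathfrak{a}+\mathfrak{b})^k$ and $\mathfrak{b}^k\subseteq(\mathfrak{a}+\mathfrak{b})^k$ for every $k$, one has the inclusion $\Gamma_{\mathfrak{a}+\mathfrak{b}}(M)\subseteq\Gamma_\mathfrak{a}(M)\cap\Gamma_\mathfrak{b}(M)$. Consequently the assumption $\Gamma_{\mathfrak{a}+\mathfrak{b}}(M)=M$ forces $\Gamma_\mathfrak{a}(M)=\Gamma_\mathfrak{b}(M)=M$ as well; this is the step in which the torsion hypothesis on $M$ is distributed to $\mathfrak{a}$ and $\mathfrak{b}$ separately.

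Next I would set $N=(0:_M\mathfrak{b})$ and verify two facts about it. Because $N$ is a submodule of $M=\Gamma_\mathfrak{a}(M)$, every element of $N$ is killed by a power of $\mathfrak{a}$, so $\Gamma_\mathfrak{a}(N)=N$. On the other hand, an element of $M$ is annihilated by $\mathfrak{a}$ and by $\mathfrak{b}$ exactly when it is annihilated by $\mathfrak{a}+\mathfrak{b}$, which gives the identification $(0:_N\mathfrak{a})=(0:_M\mathfrak{a})\cap(0:_M\mathfrak{b})=(0:_M(\mathfrak{a}+\mathfrak{b}))$. The right-hand side lies in $\Ss$ by hypothesis.

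With these in hand the argument closes in two applications of the defining condition. Applying $C_\mathfrak{a}$ to $N$, whose $\mathfrak{a}$-torsion is all of $N$ and whose submodule $(0:_N\mathfrak{a})$ of $\mathfrak{a}$-annihilated elements lies in $\Ss$, yields $N\in\Ss$, that is, $(0:_M\mathfrak{b})\in\Ss$. Since also $\Gamma_\mathfrak{b}(M)=M$, applying $C_\mathfrak{b}$ to $M$ then gives $M\in\Ss$, as required. I do not expect a genuine obstacle: the whole content is in choosing $N=(0:_M\mathfrak{b})$ as the intermediate module, so that its $\mathfrak{a}$-annihilator submodule coincides with the available $(0:_M(\mathfrak{a}+\mathfrak{b}))$, together with the observation that $(\mathfrak{a}+\mathfrak{b})$-torsion splits into $\mathfrak{a}$- and $\mathfrak{b}$-torsion.

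Finally, for the ``in particular'' clause I would argue by induction on the number of generators. Since $R$ is noetherian, an arbitrary ideal may be written as $\mathfrak{a}=(a_1)+\cdots+(a_n)$; the base case is the assumed $C_{(a_1)}$ condition, and the first part of the proposition passes from $C_{(a_1)+\cdots+(a_{n-1})}$ and $C_{(a_n)}$ to $C_{(a_1)+\cdots+(a_n)}$, which completes the induction and hence the proof.
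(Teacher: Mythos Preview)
Your proof is correct and follows essentially the same route as the paper: both pass through the intermediate module $N=(0:_M\mathfrak{b})$, identify $(0:_N\mathfrak{a})$ with $(0:_M(\mathfrak{a}+\mathfrak{b}))$ (the paper via the chain of $\Hom$ isomorphisms $\Hom(R/(\mathfrak{a}+\mathfrak{b}),M)\cong\Hom(R/\mathfrak{a},\Hom(R/\mathfrak{b},M))$, you by a direct element-wise check), then apply $C_\mathfrak{a}$ to $N$ and $C_\mathfrak{b}$ to $M$; the induction for the second clause is also the same.
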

\begin{proof}
Let $M$ be an $R$-module such that $\Gamma_{\mathfrak{a+b}}(M)=M$
and $(0:_M\mathfrak{a+b})\in \mathcal{S}$. It is clear that
$\Gamma_\mathfrak{a}(M)=\Gamma_\mathfrak{b}(M)=M$. On the other
hand, we have the following isomorphisms
$$(0:_M\mathfrak{a+b})\cong \Hom(R/\mathfrak{a+b},M)\cong
\Hom(R/\mathfrak{a},\Hom(R/\mathfrak{b},M))\cong
(0:_{(0:_M\mathfrak{b})}\mathfrak{a})$$ which imply that
$(0:_{(0:_M\mathfrak{b})}\mathfrak{a})\in \mathcal{S}$. Furthermore,
we have the following equalities
$$\Gamma_\mathfrak{a}((0:_M\mathfrak{b}))=
(0:_{\Gamma_\mathfrak{a}(M)}\mathfrak{a})= (0:_M\mathfrak{b}).$$
Now, since $\mathcal{S}$ satisfies $C_\mathfrak{a}$ condition, we
deduce that $(0:_M\mathfrak{b})\in \mathcal{S}$. On the other hand
since $\Gamma_\mathfrak{b}(M)=M$ and $\mathcal{S}$ satisfies
$C_\mathfrak{b}$ condition, we deduce that $M\in \mathcal{S}$. The
second assertion follows by an easy induction on the number of
generators of $\frak a$.
\end{proof}
\medskip

The following easy lemma is useful in proof of the next theorem.

\medskip
\begin{lemma}\label{Ext}
Let $\frak a$ be an ideal of $R$, let $\Ss$ be a Serre subcategory;
and let $M\in\Ss$. Then $\Ext_R^i(R/\frak a,M)\in\Ss$ for each
$i\geq 0.$
\end{lemma}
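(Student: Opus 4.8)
The plan is to exploit the finiteness of $R/\frak a$ together with the defining closure properties of a Serre subcategory. Since $R$ is noetherian, $R/\frak a$ is a finitely generated $R$-module, so it admits a free resolution
$$\cdots\To R^{n_2}\To R^{n_1}\To R^{n_0}\To R/\frak a\To 0$$
by \emph{finitely generated} free modules. This is the crucial input: each term is a finite direct sum of copies of $R$, which will keep everything inside $\Ss$ when we dualize.

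Applying the contravariant functor $\Hom_R(-,M)$ and using the natural isomorphisms $\Hom_R(R^{n_i},M)\cong M^{n_i}$, I obtain a cochain complex
$$0\To M^{n_0}\To M^{n_1}\To M^{n_2}\To\cdots$$
whose $i$-th cohomology computes $\Ext_R^i(R/\frak a,M)$. Denote by $d^i\colon M^{n_i}\To M^{n_{i+1}}$ the differentials, so that $\Ext_R^i(R/\frak a,M)=\Ker(d^i)/\im(d^{i-1})$.

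Next I would observe that each term $M^{n_i}$ lies in $\Ss$. Since $M\in\Ss$ and a Serre subcategory is closed under extensions, a straightforward induction shows $\Ss$ is closed under \emph{finite} direct sums (note that only finite sums are needed here, which is why closure under arbitrary direct sums, a property Serre subcategories need not possess, is never invoked). Hence $M^{n_i}\in\Ss$ for every $i$.

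Finally, $\Ext_R^i(R/\frak a,M)$ is a subquotient of $M^{n_i}$: the kernel $\Ker(d^i)$ is a submodule of $M^{n_i}\in\Ss$, so $\Ker(d^i)\in\Ss$ because $\Ss$ is closed under submodules; and $\Ext_R^i(R/\frak a,M)=\Ker(d^i)/\im(d^{i-1})$ is a quotient of $\Ker(d^i)$, so it lies in $\Ss$ because $\Ss$ is closed under quotients. This gives the claim for all $i\geq 0$. There is no real obstacle in this argument; the only point requiring a moment's care is to record that finite direct sums stay in $\Ss$ via extension-closure, so that the finitely generated free resolution dualizes to a complex living entirely in $\Ss$.
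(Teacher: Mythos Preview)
Your proof is correct and follows essentially the same approach as the paper: take a finitely generated free resolution of $R/\frak a$, apply $\Hom_R(-,M)$, observe that each term $\Hom_R(F_i,M)\cong M^{n_i}$ lies in $\Ss$, and conclude since $\Ext_R^i(R/\frak a,M)$ is a subquotient of such a term. The paper's argument is just a terser version of yours.
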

\begin{proof}
Let $\dots\To F_1\To F_0\To 0$ be a free resolution of $R/\frak a$
such that each $F_i$ is finitely generated. As $\Ss$ is Serre,
$\Hom_R(F_i,M)\in\Ss$ for each $i$. Now, since $\Ext_R^i(R/\frak
a,M)$ is the quotient of submodules of $\Hom_R(F_i,M)$, we deduce
that $\Ext_R^i(R/\frak a,M)\in\Ss$.
\end{proof}

Now, we are ready to state one of the main results of this paper.

\medskip

\begin{theorem}\label{intpro}
Let $\frak a$ and $\frak b$ be two ideals of $R$ and let $\Ss$ be a
Serre subcategory.  Then the following statements are
equivalent: \\
${\rm(i)}$ $\Ss$ satisfies $C_{\frak a+\frak b}$ and $C_{\frak
a\cap\frak b}$ conditions;\\
 ${\rm(ii)}$ $\Ss$ satisfies $C_{\frak a+\frak b}$ and $C_{\frak
a\frak b}$ conditions;\\
 ${\rm(iii)}$  $\Ss$ satisfies $C_{\frak a}$ and $C_{\frak
b}$ conditions.
\end{theorem}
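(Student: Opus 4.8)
The plan is to separate the easy equivalence (i)$\Leftrightarrow$(ii) from the substantive content, which is (i)$\Leftrightarrow$(iii). Since $\Ss$ is Serre, Proposition~\ref{rad} shows that for any ideal $\frak c$ the condition $C_{\frak c}$ depends only on $\sqrt{\frak c}$. As $\sqrt{\frak a\cap\frak b}=\sqrt{\frak a\frak b}$, the conditions $C_{\frak a\cap\frak b}$ and $C_{\frak a\frak b}$ are equivalent, while $C_{\frak a+\frak b}$ is shared by (i) and (ii); hence (i)$\Leftrightarrow$(ii) is immediate, and it remains to establish (iii)$\Rightarrow$(i) and (i)$\Rightarrow$(iii). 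Both of these I would prove by a two-step filtration of the given module along a suitable $\Gamma$-submodule.

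For (iii)$\Rightarrow$(i): the $C_{\frak a+\frak b}$ half is exactly Proposition~\ref{plus}, so the real task is to deduce $C_{\frak a\cap\frak b}$ from $C_{\frak a}$ and $C_{\frak b}$. Given $M$ with $\Gamma_{\frak a\cap\frak b}(M)=M$ and $(0:_M\frak a\cap\frak b)\in\Ss$, I would first note $(0:_M\frak a)\subseteq(0:_M\frak a\cap\frak b)\in\Ss$, so closure under submodules gives $(0:_M\frak a)\in\Ss$; then $C_{\frak a}$ forces $N:=\Gamma_{\frak a}(M)\in\Ss$. On the $\frak a$-torsion-free quotient $\bar M:=M/N$, the inclusion $\frak a^n\frak b^n\subseteq(\frak a\cap\frak b)^n$ shows $\frak b^n\bar x\subseteq\Gamma_{\frak a}(\bar M)=0$ for every $\bar x$, whence $\bar M=\Gamma_{\frak b}(\bar M)$. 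Applying $\Hom_R(R/\frak b,-)$ to $0\to N\to M\to\bar M\to 0$ and using $(0:_M\frak b)\in\Ss$ together with $\Ext^1_R(R/\frak b,N)\in\Ss$ from Lemma~\ref{Ext} yields $(0:_{\bar M}\frak b)\in\Ss$; then $C_{\frak b}$ gives $\bar M\in\Ss$, and the same short exact sequence, together with closure under extensions, gives $M\in\Ss$.

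For (i)$\Rightarrow$(iii) I would run the mirror-image filtration to prove $C_{\frak a}$, with $C_{\frak b}$ following by symmetry. Starting from $M$ with $\Gamma_{\frak a}(M)=M$ and $(0:_M\frak a)\in\Ss$, set $N:=\Gamma_{\frak a+\frak b}(M)$; since $(0:_M\frak a+\frak b)\subseteq(0:_M\frak a)\in\Ss$, the condition $C_{\frak a+\frak b}$ gives $N\in\Ss$. The quotient $\bar M=M/N$ is $\frak a$-torsion with $\Gamma_{\frak a+\frak b}(\bar M)=0$, and for an $\frak a$-torsion module one checks $\Gamma_{\frak a+\frak b}=\Gamma_{\frak b}$, so $\Gamma_{\frak b}(\bar M)=0$. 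The decisive observation is then that $(0:_{\bar M}\frak a\cap\frak b)=(0:_{\bar M}\frak a)$: if $(\frak a\cap\frak b)x=0$ then $\frak a\frak b x=0$, so $\frak a x\subseteq(0:_{\bar M}\frak b)\subseteq\Gamma_{\frak b}(\bar M)=0$. Since $(0:_{\bar M}\frak a)\in\Ss$ (again via $0\to N\to M\to\bar M\to 0$ and Lemma~\ref{Ext}), the condition $C_{\frak a\cap\frak b}$ gives $\bar M\in\Ss$, and the extension gives $M\in\Ss$.

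I expect the main obstacle to be the control of annihilator submodules: a priori one only knows $(0:_M\frak a)\in\Ss$, whereas the condition one wishes to apply involves $(0:_{\,\cdot}\,\frak a\cap\frak b)$ or $(0:_{\,\cdot}\,\frak b)$, which are larger. Two devices resolve this. First, passing to the torsion-free quotient $\bar M$ makes the extra annihilators collapse onto the one already controlled, via $\frak a\frak b\subseteq\frak a\cap\frak b$ and the vanishing of the relevant $\Gamma$. Second, Lemma~\ref{Ext} keeps the $\Ext^1$ error terms arising from $0\to N\to M\to\bar M\to 0$ inside $\Ss$. The supporting torsion identities $\Gamma_{\frak a\cap\frak b}(\bar M)=\bar M$, $\Gamma_{\frak a+\frak b}(\bar M)=\Gamma_{\frak b}(\bar M)$, and the annihilator collapse are all routine once one tracks the inclusions $\frak a^n\frak b^n\subseteq(\frak a\cap\frak b)^n\subseteq\frak a^n$, so the genuine content is in assembling these facts with the two hypothesized $C$-conditions.
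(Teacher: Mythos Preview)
Your proof is correct and follows essentially the same approach as the paper: the equivalence (i)$\Leftrightarrow$(ii) via Proposition~\ref{rad}, and in each remaining direction a two-step filtration of $M$ by the appropriate $\Gamma$-submodule, with Lemma~\ref{Ext} controlling the $\Ext^1$ term and the key annihilator collapse $(0:_{\bar M}\frak a\cap\frak b)=(0:_{\bar M}\frak a)$ (resp.\ $(0:_{\bar M}\frak a\frak b)=(0:_{\bar M}\frak a)$) on the torsion-free quotient. The only cosmetic difference is that the paper routes the hard implications through (ii) rather than (i), and phrases your step ``$C_{\frak a+\frak b}$ gives $N\in\Ss$'' as an appeal to Lemma~\ref{gam}; your submodule $N=\Gamma_{\frak a+\frak b}(M)$ coincides with the paper's $\Gamma_{\frak b}(M)$ since $M$ is $\frak a$-torsion.
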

\begin{proof}
(i)$\Leftrightarrow$ (ii). As $\sqrt{\frak a\cap\frak b}=\sqrt{\frak
a\frak b}$, it follows from Proposition \ref{rad} that $\Ss$
satisfies $C_{\frak a\cap\frak b}$ condition if and only $\Ss$
satisfies $C_{\frak a\frak b}$ condition.

(ii)$\Rightarrow$(iii). We prove that $\Ss$ satisfies $C_{\frak a}$
condition and a correspondence proof holds for the ideal $\frak b$.
Let $M$ be an $R$-module and $M=\Gamma_{\frak a}(M)$ and $(0:_M\frak
a)\in\Ss$. It is clear to see that $(0:_M\frak a+\frak b)\subseteq
(0:_M\frak a)$ and so $(0:_M\frak a+\frak b)\in\Ss$. As $\Ss$
satisfies $C_{\frak a+\frak b}$ condition, it follows from Lemma
\ref{gam} that $\Gamma_{\frak a+\frak b}(M)=\Gamma_{\frak
b}(M)\in\Ss$. Now, consider the following exact sequence of
$R$-modules $$0\To \Gamma_{\frak b}(M)\To M\To M/\Gamma_{\frak
b}(M)\To 0.$$ Since $\Ss$ is Serre, it suffices to show that
$M/\Gamma_{\frak b}(M)\in\Ss$. Applying the functor $\Hom_R(R/\frak
a,-)$ to the above exact sequence and using Lemma \ref{Ext}, we
conclude that $(0:_{M/\Gamma_{\frak b}(M)}\frak a)\in\Ss$. We now
prove that $(0:_{M/\Gamma_{\frak b}(M)}\frak a)=(0:_{M/\Gamma_{\frak
b}(M)}\frak a\frak b)$. The inequality $(0:_{M/\Gamma_{\frak
b}(M)}\frak a)\subseteq(0:_{M/\Gamma_{\frak b}(M)}\frak a\frak b)$
is obvious. For, the other inequality, let $m+\Gamma_{\frak
b}(M)\in(0:_{M/\Gamma_{\frak b}(M)}\frak a\frak b)$. Then $\frak
a\frak b m\subseteq \Gamma_{\frak b}(M)$ and so there exists
$n\in\mathbb{N}$ such that $\frak b^n(\frak a\frak b m)=0$. This
implies that $\frak a m\subseteq \Gamma_{\frak b}(M)$; and hence
$m+\Gamma_{\frak b}(M)\in(0:_{M/\Gamma_{\frak b}(M)}\frak a)$.
Therefore $(0:_{M/\Gamma_{\frak b}(M)}\frak a\frak b)\in\Ss$. On the
other hand the fact that $\Gamma_{\frak a}(M/\Gamma_{\frak
b}(M))=M/\Gamma_{\frak b}(M)$ implies that $\Gamma_{\frak a\frak
b}(M/\Gamma_{\frak b}(M))=M/\Gamma_{\frak b}(M)$. Now, since $\Ss$
satisfies $C_{\frak a\frak b}$ condition, there is $M/\Gamma_{\frak
b}(M)\in\Ss.$

(iii)$\Rightarrow$(ii). That $\Ss$ satisfies $C_{\frak a+\frak b}$
condition follows by Proposition \ref{plus}. Let $M$ be an
$R$-module such that $\Gamma_{\frak a\frak b}(M)=M$ and $(0:_M\frak
a\frak b)\in\Ss$. As $(0:_{\Gamma_{\frak a}(M)}\frak
a)\subseteq(0:_M\frak a)\subseteq (0:_M\frak a\frak b)\in\Ss$ and
$\Ss$ satisfies $C_\frak a$ condition, we have $\Gamma_{\frak
a}(M)\in\Ss$. Considering the following exact sequence of
$R$-modules
$$0\To \Gamma_{\frak a}(M)\To M\To M/\Gamma_{\frak a}(M)\To 0,$$
it suffices to show that $M/\Gamma_{\frak a}(M)\in\Ss.$ Applying the
functor $\Hom_R(R/\frak b,-)$ to the above exact sequence induces
the following exact sequence of $R$-modules
$$\Hom_R(R/\frak b, M)\To \Hom_R(R/\frak b,M/\Gamma_{\frak a}(M))\To \Ext_R ^1(R/\frak b,\Gamma_{\frak a}(M)).$$
As $(0:_M\frak b)\subseteq(0:_M\frak a\frak b)\in\Ss$, there is
$\Hom_R(R/\frak b, M)\cong (0:_M\frak a)\in S$; moreover Lemma
\ref{Ext} implies that $\Ext_R ^1(R/\frak b,\Gamma_{\frak
a}(M))\in\Ss$. Therefore, since $\Ss$ is Serre, we have
$$(0:_{M/\Gamma_{\frak a}(M)}\frak b)\cong\Hom_R(R/\frak
b,M/\Gamma_{\frak a}(M))\in\Ss.$$ On the other hand, we show that
$\Gamma_{\frak b}(M/\Gamma_{\frak a}(M))=M/\Gamma_{\frak a}(M)$. Let
$m+\Gamma_{\frak a}(M)\in M/\Gamma_{\frak a}(M)$. Since
$\Gamma_{\frak a\frak b}(M)=M$, there exists a positive integer $n$
such that $(\frak a\frak b)^nm=0$. Thus $\frak
b^nm\subseteq\Gamma_{\frak a}(M)$ and so $\frak b^n(m+\Gamma_{\frak
a}(M))=0$. The last equality implies that $m+\Gamma_{\frak
a}(M)\in\Gamma_{\frak b}(M/\Gamma_{\frak a}(M))$. Lastly, since
$\Ss$ satisfies $C_{\frak b}$ condition, we have $M/\Gamma_{\frak
a}(M)\in\Ss.$
\end{proof}


\medskip

\begin{corollary}\label{min}
Let $\frak a$ be an ideal of $R$ and let $\Ss$ be a Serre
subcategory. If $\Ss$ satisfies $C_\frak p$ condition for every
minimal prime ideal $\frak p$ of $\frak a$, then $\Ss$ satisfies
$C_{\frak a}$ condition.
\end{corollary}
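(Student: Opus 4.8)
The plan is to reduce to the radical ideal and then peel off the minimal primes one at a time using Theorem \ref{intpro}. First I would recall the standard commutative-algebra fact that, since $R$ is noetherian, $\frak a$ has only finitely many minimal primes $\frak p_1,\dots,\frak p_n$ and that $\sqrt{\frak a}=\frak p_1\cap\cdots\cap\frak p_n$. A Serre subcategory is in particular closed under taking submodules, so Proposition \ref{rad} applies and tells us that $\Ss$ satisfies $C_{\frak a}$ if and only if it satisfies $C_{\sqrt{\frak a}}$. Thus it suffices to prove that $\Ss$ satisfies $C_{\frak p_1\cap\cdots\cap\frak p_n}$.

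Next I would set up an induction on the number of minimal primes of $\frak a$. The base case is immediate: when there is a single minimal prime, $\sqrt{\frak a}=\frak p_1$ and the hypothesis already supplies $C_{\frak p_1}$. For the inductive step, assume $\Ss$ satisfies $C_{\frak p_1\cap\cdots\cap\frak p_k}$. Since $\Ss$ also satisfies $C_{\frak p_{k+1}}$ by hypothesis, the implication ${\rm(iii)}\Rightarrow{\rm(i)}$ of Theorem \ref{intpro}, applied to the pair of ideals $\frak p_1\cap\cdots\cap\frak p_k$ and $\frak p_{k+1}$, shows that $\Ss$ satisfies $C_{(\frak p_1\cap\cdots\cap\frak p_k)\cap\frak p_{k+1}}=C_{\frak p_1\cap\cdots\cap\frak p_{k+1}}$. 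Carrying the induction through to $n$ yields that $\Ss$ satisfies $C_{\frak p_1\cap\cdots\cap\frak p_n}=C_{\sqrt{\frak a}}$, and combining this with the first reduction (via Proposition \ref{rad}) gives $C_{\frak a}$, as desired.

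I do not expect any genuine obstacle here; the statement is a clean consequence of the machinery already developed. The only points requiring care are, first, invoking Theorem \ref{intpro} in the correct direction, namely the implication ${\rm(iii)}\Rightarrow{\rm(i)}$, which is exactly the passage from $C$ on two ideals to $C$ on their intersection; and second, phrasing the induction in terms of the \emph{finitely many} minimal primes guaranteed by noetherianness, so that the ideal appearing at each stage is a genuine finite intersection to which Theorem \ref{intpro} legitimately applies. Everything else is bookkeeping, and no new module-theoretic argument beyond those already established in Propositions \ref{rad}, \ref{plus} and Theorem \ref{intpro} is needed.
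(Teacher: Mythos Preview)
Your proposal is correct and follows essentially the same route as the paper: reduce to $\sqrt{\frak a}$ via Proposition \ref{rad}, write $\sqrt{\frak a}=\frak p_1\cap\cdots\cap\frak p_n$, and then induct using the implication ${\rm(iii)}\Rightarrow{\rm(i)}$ of Theorem \ref{intpro}. Your write-up is in fact more careful than the paper's (you make the noetherian finiteness explicit and spell out the induction), but no new idea is involved.
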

\begin{proof}
In view of Proposition \ref{rad}, it suffices to show that $\Ss$
satisfies $C_{\sqrt{\mathfrak{a}}}$ condition. Let $\frak
p_1,\dots,\frak p_n$ be minimal prime ideals of $R$. Then
$\sqrt{\frak a}=\cap_{i=1}^n\frak p_i$. As $\Ss$ satisfies $C_{\frak
p_i}$ condition for each $i$, by applying an easy induction and
using Theorem \ref{intpro}, we deduce that $\Ss$ satisfies
$C_{\sqrt{\frak a}}$ condition.
\end{proof}


\medskip
\begin{corollary}\label{d}
Let $\Ss$ be a Serre subcategory and $\frak m_1,\dots,\frak m_n$ be
maximal ideals. If $\Ss$ satisfies $C_{\prod_{i=1}^n\frak m_i}$
condition, then it satisfies $C_{\frak m_i}$ condition for each $i$.
\end{corollary}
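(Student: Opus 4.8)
The plan is to reduce first to the case of pairwise distinct maximal ideals and then to strip the factors off the product one at a time using Theorem \ref{intpro}, exploiting the comaximality of distinct maximal ideals.

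First I would pass to the distinct maximal ideals occurring among $\frak m_1,\dots,\frak m_n$, say $\frak n_1,\dots,\frak n_k$. Since distinct maximal ideals are pairwise comaximal, we have $\sqrt{\prod_{i=1}^n\frak m_i}=\bigcap_{j=1}^k\frak n_j=\prod_{j=1}^k\frak n_j$, the last equality because a finite product of pairwise comaximal ideals coincides with their intersection. As $\Ss$ is Serre, Proposition \ref{rad} gives the equivalence of $C_{\prod_{i=1}^n\frak m_i}$ and $C_{\sqrt{\prod_{i=1}^n\frak m_i}}$, so the hypothesis yields that $\Ss$ satisfies $C_{\prod_{j=1}^k\frak n_j}$. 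Since every $\frak m_i$ equals some $\frak n_j$, it then suffices to prove that $\Ss$ satisfies $C_{\frak n_j}$ for each $j$. Securing this reduction is the one delicate point of the argument: a repeated maximal ideal would destroy the comaximality exploited below, and passing to distinct ideals via radicals is exactly what removes that obstruction.

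Next I would induct on $k$, the case $k=1$ being the hypothesis. For $k>1$ set $\frak a=\frak n_1$ and $\frak b=\prod_{j=2}^k\frak n_j$. As $\frak n_1$ is comaximal with each $\frak n_j$ for $j\geq 2$, it is comaximal with their product, so $\frak a+\frak b=R$; indeed $R=\prod_{j\geq 2}(\frak n_1+\frak n_j)\subseteq\frak n_1+\frak b$. Now $\Ss$ satisfies $C_R$ trivially, because $\Gamma_R(M)=0$ for every $M$, whence the only module with $\Gamma_R(M)=M$ is $M=0\in\Ss$. Thus $\Ss$ satisfies $C_{\frak a+\frak b}$, and it satisfies $C_{\frak a\frak b}=C_{\prod_{j=1}^k\frak n_j}$ by the previous paragraph. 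Hence condition (ii) of Theorem \ref{intpro} holds for the pair $\frak a,\frak b$, and the implication (ii)$\Rightarrow$(iii) shows that $\Ss$ satisfies both $C_{\frak n_1}$ and $C_{\frak b}=C_{\prod_{j=2}^k\frak n_j}$.

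Finally, applying the induction hypothesis to the product $\prod_{j=2}^k\frak n_j$ yields $C_{\frak n_j}$ for all $j\geq 2$, and combined with $C_{\frak n_1}$ this establishes $C_{\frak n_j}$ for every $j$, which is the desired conclusion. The main work is entirely in the first step, the bookkeeping around repeated maximal ideals; once distinctness and hence comaximality are in hand, the iterated application of Theorem \ref{intpro} (with the trivial fact that $C_R$ always holds) is routine.
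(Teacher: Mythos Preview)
Your proof is correct and rests on the same key ingredients as the paper's: comaximality of distinct maximal ideals, the trivial observation that $\Ss$ satisfies $C_R$, and Theorem~\ref{intpro}. The paper's argument is more direct: for each fixed $j$ it sets $\frak a=\frak m_j$ and $\frak b=\prod_{i\neq j}\frak m_i$, notes $\frak a+\frak b=R$, and applies Theorem~\ref{intpro} once to extract $C_{\frak m_j}$, so no induction is needed; on the other hand the paper tacitly assumes the $\frak m_i$ are pairwise distinct, a point you handle more carefully via Proposition~\ref{rad}.
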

\begin{proof}
It is straightforward to see that $\Ss$ satisfies $C_R$ condition.
On the other hand $\prod_{i=1, i\neq j}^n\frak m_i+\frak m_j=R$ for
each $j$. Therefore, it follows from Theorem \ref{intpro} that $\Ss$
satisfies $C_{\frak m_j}$ condition for each $j$.
\end{proof}

The following corollary shows that over an artinian ring, every
Serre subcategory satisfies $C_{\frak a}$ condition for every $\frak
a$ ideals of $R$.

\medskip
\begin{corollary}
Let $R$ be an artinian ring and let $\Ss$ be a Serre subcategory.
Then $\Ss$ satisfies $C_{\frak a}$ condition for each ideal $\frak
a$ of $R$.
\end{corollary}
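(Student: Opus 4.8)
The plan is to reduce to the case of a maximal ideal and then exploit the finite length of the injective hull $E(R/\frak m)$ over an artinian ring. First I would invoke Corollary \ref{min}: since $R$ is artinian, every prime ideal is maximal, so the minimal primes of $\frak a$ are precisely the (finitely many) maximal ideals containing $\frak a$. Hence it suffices to prove that $\Ss$ satisfies $C_{\frak m}$ condition for an arbitrary maximal ideal $\frak m$ of $R$, and the corollary then delivers $C_{\frak a}$ for every $\frak a$.

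So fix a maximal ideal $\frak m$, write $k=R/\frak m$, and let $M$ be an $R$-module with $\Gamma_{\frak m}(M)=M$ and $(0:_M\frak m)\in\Ss$. The first observation I would record is that $M$ is $\frak m$-torsion, so every nonzero cyclic submodule $Rx$ is finitely generated and supported only at $\frak m$, hence of finite length; in particular $Rx$ has nonzero socle and its only composition factor is $k$. Two consequences follow: the socle $(0:_M\frak m)$ is essential in $M$ (it meets every nonzero cyclic submodule), and it is semisimple with every simple summand isomorphic to $k$, say $(0:_M\frak m)\cong k^{(I)}$ for some index set $I$. Thus the hypothesis $(0:_M\frak m)\in\Ss$ reads simply $k^{(I)}\in\Ss$.

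Next I would pass to the injective hull. Because $(0:_M\frak m)$ is essential in $M$, the inclusion extends to $M\hookrightarrow E(M)=E\bigl((0:_M\frak m)\bigr)=E(k^{(I)})$, and since $R$ is noetherian the injective hull commutes with direct sums, giving $E(k^{(I)})\cong E(k)^{(I)}$. The crucial ring-theoretic fact is that over an artinian ring $E(k)$ has finite length with every composition factor isomorphic to $k$ (this can be verified by decomposing $R$ into a product of artinian local rings and working in the local factor at $\frak m$, where $E(k)$ is the Matlis dual of that factor). Choosing a composition series $0=E_0\subset E_1\subset\cdots\subset E_\ell=E(k)$ with each $E_j/E_{j-1}\cong k$ and applying the exact $I$-fold direct sum termwise yields a finite filtration $0=E_0^{(I)}\subset\cdots\subset E_\ell^{(I)}=E(k)^{(I)}$ whose successive quotients are all isomorphic to $k^{(I)}=(0:_M\frak m)\in\Ss$. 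As $\Ss$ is Serre, hence closed under extensions, finitely many extension steps give $E(k)^{(I)}\in\Ss$; and since $\Ss$ is closed under submodules and $M\hookrightarrow E(k)^{(I)}$, I conclude $M\in\Ss$, which is exactly $C_{\frak m}$ condition.

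The step I expect to be the main obstacle is precisely the passage through the possibly infinite direct sum $E(k)^{(I)}$: a Serre subcategory need not be closed under arbitrary direct sums, so one cannot naively argue that $(0:_M\frak m)\in\Ss$ forces its injective hull into $\Ss$ by adding socle layers across the whole index set at once. The device that resolves this is to filter $E(k)$ \emph{first}, where it has finite length so that only $\ell$ extensions are needed, and only \emph{afterward} take the $I$-fold direct sum; this arranges that each of the finitely many extension steps involves the single module $k^{(I)}$ already assumed to lie in $\Ss$. Establishing the supporting fact that $E(k)$ has finite length with composition factors $k$ over an artinian ring is therefore the technical heart underpinning the entire reduction.
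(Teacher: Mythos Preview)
Your proof is correct but takes a genuinely different route from the paper's. Both arguments reduce via Corollary~\ref{min} to establishing $C_{\frak m}$ for each maximal ideal $\frak m$, and then diverge. The paper argues entirely from the machinery it has already developed: $\Ss$ trivially satisfies $C_0$; since $R$ is artinian, $\sqrt{0}=\frak m_1\cdots\frak m_n$ is the product of all maximal ideals, so Proposition~\ref{rad} upgrades $C_0$ to $C_{\frak m_1\cdots\frak m_n}$; then Corollary~\ref{d} (itself an immediate consequence of Theorem~\ref{intpro}, using that $\prod_{i\neq j}\frak m_i+\frak m_j=R$) delivers $C_{\frak m_j}$ for each $j$. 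Your argument instead proves $C_{\frak m}$ directly by embedding $M$ into $E(k)^{(I)}$ and filtering the latter by finitely many copies of $k^{(I)}$, exploiting that $E(R/\frak m)$ has finite length with all composition factors $R/\frak m$ over an artinian ring. What your approach buys is self-containment and a transparent structural picture: it needs only Corollary~\ref{min} from the paper and would work verbatim in any setting where $E(R/\frak m)$ has finite length. What the paper's approach buys is brevity and internal economy: given the earlier results, no discussion of injective hulls, socles, or composition series is required, and the artinian hypothesis enters only through the single identity $\sqrt{0}=\prod\frak m_i$.
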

\begin{proof}
Let Max$R=\{\frak m_1,\dots,\frak m_n\}$. Then
$\sqrt{0}=\prod_{i=1}^n\frak m_i$. It is straightforward to show
that $\Ss$ satisfies $C_0$ condition and so in view of Proposition
\ref{rad}, it satisfies $C_{\prod_{i=1}^n\frak m_i}$ condition. Now
Corollary \ref{d} implies that $\Ss$ satisfies $C_{\frak m_i}$
condition for each $i$. Lastly, in view of Corollary \ref{min} we
conclude that $\Ss$ satisfies $C_{\frak a}$ condition for each ideal
$\frak a$ of $R$.
\end{proof}

\medskip
Let $\Ss_1$ and $\Ss_2$ be two subcategories of $R$-Mod. We denote
by $<\Ss_1,\Ss_2>$ a class of $R$-Mod consisting of all $R$-modules
$M$ such that there exists an exact sequence of $R$-modules $0\To
M_1\To M\To M_2\To 0$ with $M_i\in\Ss_i$ for $i=1,2$. We can also
refer to $<\Ss_1,\Ss_2>$ as the class of extension modules of
$\Ss_1$ by $\Ss_2$. A well-known example is the class of {\it
minimax} modules $\mathcal{M}=<R$-mod,$R$-art$>$, where $R$-art is
the subcategory of artinian modules.

\medskip

\begin{theorem}
Let $\Ss_1$ and $\Ss_2$ be two Serre subcategories, let $\frak a$ be
an ideal of $R$; and let $<\Ss_1,\Ss_2>$ and $\Ss_1\cap\Ss_2$
satisfy $C_{\frak a}$ condition. Then $\Ss_1$ and $\Ss_2$ satisfy
$C_{\frak a}$ condition.
\end{theorem}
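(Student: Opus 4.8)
The plan is to use the two hypotheses in tandem: the class $<\Ss_1,\Ss_2>$ together with its $C_{\frak a}$ condition lets me \emph{produce} a short exact sequence exhibiting a given torsion module inside the extension class, while the $C_{\frak a}$ condition on $\Ss_1\cap\Ss_2$, combined with the Serre property of $\Ss_1$ and $\Ss_2$, lets me \emph{collapse} that sequence into one of $\Ss_1$ or $\Ss_2$. First I would record two trivial inclusions that follow directly from the definition of the extension class and the fact that a Serre subcategory contains the zero module: taking $0\To M\To M\To 0\To 0$ gives $\Ss_1\subseteq <\Ss_1,\Ss_2>$, and taking $0\To 0\To M\To M\To 0$ gives $\Ss_2\subseteq <\Ss_1,\Ss_2>$. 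I will then prove the two assertions separately, since the roles of $\Ss_1$ (the submodule) and $\Ss_2$ (the quotient) inside $<\Ss_1,\Ss_2>$ are not symmetric.

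For $\Ss_1$, let $M$ satisfy $\Gamma_{\frak a}(M)=M$ and $(0:_M\frak a)\in\Ss_1$. By the inclusion above, $(0:_M\frak a)\in <\Ss_1,\Ss_2>$, and since $<\Ss_1,\Ss_2>$ satisfies $C_{\frak a}$ I obtain $M\in <\Ss_1,\Ss_2>$, i.e.\ an exact sequence $0\To M_1\To M\To M_2\To 0$ with $M_1\in\Ss_1$ and $M_2\in\Ss_2$. As $\Ss_1$ is Serre and $M_1\in\Ss_1$, it suffices to place the quotient $M_2$ in $\Ss_1$, and I will do so by showing $M_2\in\Ss_1\cap\Ss_2$ and invoking $C_{\frak a}$ for $\Ss_1\cap\Ss_2$. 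Now $M_2$ is a quotient of the $\frak a$-torsion module $M$, so $\Gamma_{\frak a}(M_2)=M_2$, and $(0:_{M_2}\frak a)$ is a submodule of $M_2\in\Ss_2$, hence lies in $\Ss_2$. The essential point is $(0:_{M_2}\frak a)\in\Ss_1$: applying $\Hom_R(R/\frak a,-)$ to the sequence yields an exact sequence
$$(0:_M\frak a)\To (0:_{M_2}\frak a)\To \Ext_R^1(R/\frak a,M_1),$$
so the image of $(0:_M\frak a)$ in $(0:_{M_2}\frak a)$ is a quotient of $(0:_M\frak a)\in\Ss_1$, while the remaining part embeds into $\Ext_R^1(R/\frak a,M_1)$, which belongs to $\Ss_1$ by Lemma \ref{Ext}; since $\Ss_1$ is Serre this forces $(0:_{M_2}\frak a)\in\Ss_1$. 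Thus $(0:_{M_2}\frak a)\in\Ss_1\cap\Ss_2$, and $C_{\frak a}$ for $\Ss_1\cap\Ss_2$ gives $M_2\in\Ss_1\cap\Ss_2\subseteq\Ss_1$, whence $M\in\Ss_1$.

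For $\Ss_2$ the same opening produces, for $N$ with $\Gamma_{\frak a}(N)=N$ and $(0:_N\frak a)\in\Ss_2$, a sequence $0\To N_1\To N\To N_2\To 0$ with $N_1\in\Ss_1$ and $N_2\in\Ss_2$; here it suffices to place the \emph{submodule} $N_1$ in $\Ss_2$, which is immediate: $(0:_{N_1}\frak a)\subseteq (0:_N\frak a)\in\Ss_2$ and $(0:_{N_1}\frak a)\subseteq N_1\in\Ss_1$ give $(0:_{N_1}\frak a)\in\Ss_1\cap\Ss_2$, so $C_{\frak a}$ for $\Ss_1\cap\Ss_2$ yields $N_1\in\Ss_1\cap\Ss_2\subseteq\Ss_2$ and hence $N\in\Ss_2$. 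The main obstacle is the $\Ss_1$ case: unlike the submodule situation, the socle $(0:_{M_2}\frak a)$ of the \emph{quotient} $M_2$ cannot be controlled directly, and the $\Ext^1$ term produced by $\Hom_R(R/\frak a,-)$ must be absorbed --- this is precisely where Lemma \ref{Ext} and the Serre hypothesis on $\Ss_1$ carry the argument.
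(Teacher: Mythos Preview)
Your proof is correct and follows essentially the same approach as the paper's: both use the inclusion $\Ss_i\subseteq\langle\Ss_1,\Ss_2\rangle$ together with the $C_{\frak a}$ condition on $\langle\Ss_1,\Ss_2\rangle$ to obtain a short exact sequence, then apply $\Hom_R(R/\frak a,-)$ and Lemma~\ref{Ext} to show $(0:_{M_2}\frak a)\in\Ss_1\cap\Ss_2$, and finish via the $C_{\frak a}$ condition on $\Ss_1\cap\Ss_2$. The paper only treats $\Ss_1$ and declares the $\Ss_2$ case ``similar''; you spell it out and correctly observe that it is in fact easier, since for the submodule $N_1$ the socle $(0:_{N_1}\frak a)$ is controlled directly without any $\Ext$ term.
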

\begin{proof}
We prove the claim for $\Ss_1$ and the proof for $\Ss_2$ is similar.
Let $M$ be an $R$-module such that $M=\Gamma_{\frak a}(M)$ and
$(0:_M\frak a)\in\Ss_1$. As $\Ss_1\subseteq<\Ss_1,\Ss_2>$ and
$<\Ss_1,\Ss_2>$ satisfies $C_{\frak a}$ condition, we have $M\in
<\Ss_1,\Ss_2>$. Then there is an exact sequence of $R$-modules $0\To
M_1\To M\To M_2\To 0$ such that $M_1\in\Ss$ and $M_2\in\Ss_2$. Since
$\Ss_1$ is Serre, it suffices to show that $M_2\in\Ss_1$. Taking the
functor $\Hom_R(R/\frak a,-)$ of the above short exact sequence, we
obtain the following exact sequence of $R$-modules
$$\Hom_R(R/\frak a, M)\To \Hom_R(R/\frak a,M_2)\To\Ext_R^1(R/\frak
a,M_1).$$ It follows from Lemma \ref{Ext} that $\Ext_R^1(R/\frak
a,M_1)\in\Ss_1$ and since $\Ss_1$ and $\Ss_2$ are Serre, we have
$(0:_{M_2}\frak a)\cong\Hom_R(R/\frak a,M_2)\in\Ss_1\cap\Ss_2$. On
the other hand, it is evident to see that $\Gamma_{\frak
a}(M_2)=M_2$ and since $\Ss_1\cap\Ss_2$ satisfies $C_{\frak a}$
condition, there is $M_2\in\Ss_1$.
\end{proof}

An immediate corollary can be given rise from the above theorem.
\medskip
\begin{corollary}
Let $\mathcal{M}$ and $\mathcal{F}$ be the classes of all minimax
modules and all modules of finite length, respectively; and let
$\frak a$ be an ideal of $R$. If $\mathcal{M}$ and $\mathcal{F}$
satisfy $C_{\frak a}$ condition, then $R$-mod satisfies $C_{\frak
a}$ condition.
\end{corollary}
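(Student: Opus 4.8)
The plan is to specialize the preceding Theorem to the subcategories $\Ss_1 = R$-mod of finitely generated modules and $\Ss_2 = R$-art of artinian modules. First I would record that both are Serre subcategories of $R$-Mod: since $R$ is noetherian, $R$-mod is closed under submodules, and it is plainly closed under quotients and extensions, while $R$-art is closed under all three operations as well. With this choice the extension class $<\Ss_1,\Ss_2>$ equals the class $\mathcal{M}$ of minimax modules, exactly as recorded immediately after the definition of $<\Ss_1,\Ss_2>$, so that $<\Ss_1,\Ss_2>$ satisfies $C_{\frak a}$ condition by hypothesis.

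The one point that needs justification is the identification of the intersection $\Ss_1\cap\Ss_2$. A module lies in $R$-mod $\cap$ $R$-art exactly when it is simultaneously finitely generated --- hence noetherian, as $R$ is noetherian --- and artinian, and a module is both noetherian and artinian precisely when it has finite length. Hence $\Ss_1\cap\Ss_2 = \mathcal{F}$, the class of all modules of finite length, which satisfies $C_{\frak a}$ condition by the second hypothesis.

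With these two identifications the hypotheses of the corollary become verbatim the hypotheses of the Theorem, and I would invoke it directly: it yields that $\Ss_1 = R$-mod and $\Ss_2 = R$-art each satisfy $C_{\frak a}$ condition, the first of which is the desired conclusion. As the argument is a pure specialization, there is no serious obstacle; the only step deserving a moment's care is the length characterization $\Ss_1\cap\Ss_2 = \mathcal{F}$, and one should check at the outset that $R$-mod and $R$-art genuinely meet the Serre hypothesis of the Theorem.
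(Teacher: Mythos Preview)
Your proposal is correct and follows essentially the same approach as the paper's proof: set $\Ss_1 = R\text{-mod}$, $\Ss_2 = R\text{-art}$, identify $<\Ss_1,\Ss_2> = \mathcal{M}$ and $\Ss_1\cap\Ss_2 = \mathcal{F}$, and invoke the preceding Theorem. The only difference is that you spell out why $R\text{-mod}$ and $R\text{-art}$ are Serre and why their intersection equals $\mathcal{F}$, while the paper simply declares these facts ``evident''.
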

\begin{proof}
If we consider $\Ss_1=R$-mod and $\Ss_2=R-$art, then it is evident
to see that $\Ss_1$ and $\Ss_2$ are Serre;
$<\Ss_1,\Ss_2>=\mathcal{M}$ and $\Ss_1\cap\Ss_2=\mathcal{F}$. Now,
the result follows immediately by the previous theorem.
\end{proof}

\medskip
\begin{proposition}\label{ss}
Let $\Ss,$ $\Ss_1$ and $\Ss_2$ be three subcategories such that
$\Ss$ is Serre and let $\frak a$ be an ideal of $R$. If $\Ss$
satisfies $C_{\frak a}$ condition on $\Ss_1$ and $\Ss_2$, then it
satisfies $C_{\frak a}$ condition on $<\Ss_1,\Ss_2>$.
\end{proposition}
\begin{proof}
Let $M\in<\Ss_1,\Ss_2>$ be an $R$-module such that $M=\Gamma_{\frak
a}(M)$ and $(0:_M\frak a)\in\Ss$. Then there is an exact sequence
$0\To M_1\To M\To M_2\To 0$ such that $M_1\in\Ss_1$ and
$M_2\in\Ss_2$. It is clear to see that $\Gamma_{\frak a}(M_i)=M_i$
for $i=1,2$ and since $\Ss$ is Serre we have $(0:_{M_1}\frak
a)\in\Ss$. Now, since $\Ss$ satisfies $C_{\frak a}$ condition on
$\Ss_1$, we have $M_1\in\Ss$. Applying the functor $\Hom_R(R/\frak
a,-)$ to the above exact sequence and using Lemma \ref{Ext} we
deduce that $(0:_{M_2}\frak a)\cong\Hom_R(R/\frak a,M_2)\in\Ss$.
Since $\Ss$ satisfies $C_{\frak a}$ condition on $\Ss_2$, there is
$M_2\in\Ss$ and finally since $\Ss$ is Serre, we have $M\in\Ss$.
\end{proof}



\medskip
\begin{corollary}
Let $\Ss$ be a Serre subcategory and let $\frak a$ be an ideal of
$R$. If $\Ss$ satisfies $C_{\frak a}$ condition on $R$-art, then it
satisfies $C_{\frak a}$ condition on $\mathcal{M}$, where
$\mathcal{M}$ is the class of all minimax modules.
\end{corollary}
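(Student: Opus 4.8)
The plan is to present $\mathcal{M}$ as an extension class and then invoke Proposition \ref{ss}. By definition $\mathcal{M}=<R\text{-mod},R\text{-art}>$, so I would set $\Ss_1=R$-mod and $\Ss_2=R$-art. Since $\Ss$ is Serre, Proposition \ref{ss} says that once $\Ss$ satisfies $C_{\frak a}$ condition on each of $\Ss_1$ and $\Ss_2$, it automatically satisfies $C_{\frak a}$ condition on $<\Ss_1,\Ss_2>=\mathcal{M}$. Thus the entire corollary reduces to verifying the $C_{\frak a}$ condition on $R$-art and on $R$-mod separately.

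The condition on $R$-art is handed to us directly as the hypothesis, so only the condition on $R$-mod remains. First I would observe that for a finitely generated module $M$ the submodule $(0:_M\frak a)$ is again finitely generated, because $R$ is noetherian; consequently both assertions $M\in\Ss$ and $(0:_M\frak a)\in\Ss$ are equivalent to membership in the Serre subcategory $\Ss\cap R$-mod of $R$-mod. Hence $\Ss$ satisfies $C_{\frak a}$ condition on $R$-mod if and only if the Serre subcategory $\Ss\cap R$-mod of $R$-mod does. Now [Y, Proposition 4.3], as already noted earlier in this section, guarantees that every Serre subcategory of $R$-mod satisfies $C_{\frak a}$ condition on $R$-mod; applying it to $\Ss\cap R$-mod supplies exactly the missing ingredient.

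With both conditions in hand, Proposition \ref{ss} delivers the conclusion. The only step that needs genuine care is the reduction to $\Ss\cap R$-mod: one must check that the $C_{\frak a}$ condition tested on a finitely generated $M$ really involves only finitely generated data, so that the external result for Serre subcategories of $R$-mod can legitimately be transferred to the Serre subcategory $\Ss$ of $R$-Mod. Everything else is a direct assembly of the hypothesis, the cited result, and Proposition \ref{ss}.
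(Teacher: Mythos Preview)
Your proposal is correct and follows essentially the same route as the paper's proof: both reduce to Proposition~\ref{ss} with $\Ss_1=R\text{-mod}$ and $\Ss_2=R\text{-art}$, use the hypothesis for the $R$-art part, and handle the $R$-mod part by passing to the Serre subcategory $\Ss\cap R\text{-mod}$ of $R\text{-mod}$ and invoking [Y, Proposition~4.3]. Your write-up is slightly more detailed in justifying why the $C_{\frak a}$ condition for $\Ss$ on $R\text{-mod}$ coincides with that for $\Ss\cap R\text{-mod}$, which the paper leaves as ``one can easily check''.
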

\begin{proof}
It is straightforward to show that $\Ss\cap R$-mod is a Serre
subcategory of $R$-mod and it follows from [Y, Proposition 4.3] that
$\Ss\cap R$-mod satisfies $C_ {\frak a}$ condition on $R$-mod. Now,
one can easily check that $\Ss$ satisfies $C_{\frak a}$ condition on
$R$-mod. Now the result follows by Proposition \ref{ss} as
$\mathcal{M}=<R$-mod,$R$-art$>$.
\end{proof}

\medskip
For each subcategory $\Ss$ of $R$-Mod, we set $\Ss^0=\{0\}$ and
$\Ss^{n+1}=<\Ss^n,\Ss>$; for $n\in\mathbb{N}$. Moreover, we set
$<\Ss>_{\rm ext}=\bigcup\Ss^n$. It is clear to see that $<\Ss>_{\rm
ext}$ is closed under taking extension of modules.

 The following theorem shows that if $\Ss$ is a Serre subcategory of
$R$-Mod and  $\frak a$ is an ideal of $R$, then $\Ss_{\frak a}$ is
closed under taking extension of modules.
\begin{theorem}
Let $\Ss$ be a Serre subcategory and let $\frak a$ be an ideal of
$R$. Then $\Ss_{\frak a}$ is closed under taking extension of
modules.
\end{theorem}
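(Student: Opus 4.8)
The plan is to derive the statement from Proposition \ref{ss}, applied with both input subcategories taken to be $\Ss_{\frak a}$ itself, and then to invoke the maximality built into the definition of $\Ss_{\frak a}$. The key observation is that closure under taking extensions of modules means precisely that $<\Ss_{\frak a},\Ss_{\frak a}>\subseteq\Ss_{\frak a}$, so it suffices to establish this single containment.

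First I would record that, directly from its definition, $\Ss$ satisfies $C_{\frak a}$ condition on $\Ss_{\frak a}$, since $\Ss_{\frak a}$ is by construction the largest subclass of $R$-Mod on which this holds. Because $\Ss$ is Serre, I may then apply Proposition \ref{ss} with $\Ss_1=\Ss_2=\Ss_{\frak a}$; this yields that $\Ss$ satisfies $C_{\frak a}$ condition on $<\Ss_{\frak a},\Ss_{\frak a}>$. Next I would use that $C_{\frak a}$ condition on a class $\mathcal{D}$ is a module-by-module property, so the collection of classes on which $\Ss$ satisfies $C_{\frak a}$ is closed under arbitrary unions and hence has a largest member, namely $\Ss_{\frak a}$; consequently every class on which $\Ss$ satisfies $C_{\frak a}$ is contained in $\Ss_{\frak a}$. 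In particular $<\Ss_{\frak a},\Ss_{\frak a}>\subseteq\Ss_{\frak a}$. Finally, given any short exact sequence $0\To M_1\To M\To M_2\To 0$ with $M_1,M_2\in\Ss_{\frak a}$, the module $M$ lies in $<\Ss_{\frak a},\Ss_{\frak a}>$ by definition, hence in $\Ss_{\frak a}$, which is exactly the desired closure.

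The only genuinely delicate point is the maximality step, and I would take care to spell it out: one must verify that $\Ss_{\frak a}$ really absorbs every class on which $\Ss$ satisfies $C_{\frak a}$. This holds precisely because whether $\Ss$ satisfies $C_{\frak a}$ on $\mathcal{D}$ depends only on the individual modules of $\mathcal{D}$, so a union of such classes is again such a class, and so $\Ss_{\frak a}$ is well defined as the largest one. I should also note that $\Ss_{\frak a}$ is a legitimate full subcategory, so that Proposition \ref{ss} is genuinely applicable with $\Ss_1=\Ss_2=\Ss_{\frak a}$; granting this, everything else is a direct consequence of the already-established stability of the $C_{\frak a}$ condition under the extension operation $<-,->$.
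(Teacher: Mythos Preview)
Your proof is correct and follows essentially the same approach as the paper: both invoke Proposition \ref{ss} with $\Ss_1=\Ss_2=\Ss_{\frak a}$ and then appeal to the maximality of $\Ss_{\frak a}$. The paper's version iterates this to show $\Ss$ satisfies $C_{\frak a}$ on all of $<\Ss_{\frak a}>_{\rm ext}$ before using maximality, whereas you observe (correctly, and more economically) that a single application already gives $<\Ss_{\frak a},\Ss_{\frak a}>\subseteq\Ss_{\frak a}$, which is the closure statement itself.
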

\begin{proof}
As $\Ss$ satisfies $C_{\frak a}$ condition on $\Ss_{\frak a}$, it
follows from Proposition \ref{ss} that $\Ss$ satisfies $C_{\frak a}$
condition on $\Ss_{\frak a}^2$. Repeating this way  we deduce that
$\Ss$ satisfies $C_{\frak a}$ condition on $\Ss_{\frak a}^n$ for
each $n\in\mathbb{N}$. Therefore $\Ss$ satisfies $C_{\frak a}$
condition on $<\Ss_{\frak a}>_{\rm ext}$. On the other hand,
$\Ss\subseteq\Ss_{\frak a}\subseteq <\Ss_{\frak a}>_{\rm ext}$ and
by the definition $\Ss_{\frak a}$ is the largest subcategory of
$R$-Mod such that $\Ss$ satisfies $C_{\frak a}$ condition on
$\Ss_{\frak a}$. Thus this fact implies that $\Ss_{\frak
a}=<\Ss_{\frak a}>_{\rm ext}$ .
\end{proof}

\medskip
We recall from [St] that a Serre subcategory $\Ss$ of $R$-Mod is
torsion subcategory if it is closed under taking arbitrary direct
sums of modules. As direct limit of a direct system of modules is a
quotient of a direct sum of modules, a torsion subcategory is closed
under taking direct limits. The following theorem shows that a
torsion subcategory $\Ss$ satisfies $C_{\frak a}$ condition for each
ideal $\frak a$ of $R$.

\begin{theorem}
Let $\Ss$ be a subcategory which is closed under taking submodules
and let $\frak a$ be an ideal of $R$. Then the following statements
hold.

$\rm{(i)}$ If $\Ss$ is closed under taking arbitrary direct sums,
 then so is $\Ss_{\frak a}$.

$\rm{(ii)}$ If $\Ss$ is a torsion subcategory, then $\Ss$ satisfies
$C_{\frak a}$ condition.
\end{theorem}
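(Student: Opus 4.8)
The plan is to reduce membership in $\Ss_{\frak a}$ to a module-by-module test and then to verify that test, in (i) for direct sums and in (ii) for every module $M$ with $\Gamma_{\frak a}(M)=M$ and $(0:_M\frak a)\in\Ss$. The first thing I would record is a reformulation of the definition: since $\Ss_{\frak a}$ is the \emph{largest} class on which $\Ss$ satisfies $C_{\frak a}$ condition, a module $M$ lies in $\Ss_{\frak a}$ if and only if the implication ``$\Gamma_{\frak a}(M)=M$ and $(0:_M\frak a)\in\Ss$ imply $M\in\Ss$'' holds for $M$. Indeed, the class of all modules satisfying this implication is itself a class on which $\Ss$ satisfies $C_{\frak a}$ condition, and every such class is contained in it, so it must coincide with $\Ss_{\frak a}$.

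For (i), let $\{M_\lambda\}_{\lambda\in\Lambda}$ be a family in $\Ss_{\frak a}$ and set $N=\bigoplus_\lambda M_\lambda$. Using the reformulation, I would assume $\Gamma_{\frak a}(N)=N$ and $(0:_N\frak a)\in\Ss$ and deduce $N\in\Ss$. Since $\Gamma_{\frak a}(N)=\bigoplus_\lambda\Gamma_{\frak a}(M_\lambda)$ and $(0:_N\frak a)=\bigoplus_\lambda(0:_{M_\lambda}\frak a)$, the first hypothesis forces $\Gamma_{\frak a}(M_\lambda)=M_\lambda$ for every $\lambda$, while the second, together with closure of $\Ss$ under submodules, yields $(0:_{M_\lambda}\frak a)\in\Ss$ for every $\lambda$ (each is a submodule of $(0:_N\frak a)$). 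As $M_\lambda\in\Ss_{\frak a}$, the reformulation gives $M_\lambda\in\Ss$, and closure of $\Ss$ under arbitrary direct sums gives $N\in\Ss$. Hence $N\in\Ss_{\frak a}$.

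For (ii), I want $\Ss_{\frak a}=R$-Mod, so I would take any $M$ with $\Gamma_{\frak a}(M)=M$ and $(0:_M\frak a)\in\Ss$ and show $M\in\Ss$. The core is an induction proving $(0:_M\frak a^n)\in\Ss$ for all $n\geq1$, the base case being the hypothesis. For the step, write $\frak a=(a_1,\dots,a_s)$ and consider $\phi:(0:_M\frak a^{n+1})\To\big((0:_M\frak a^n)\big)^s$ given by $\phi(x)=(a_1x,\dots,a_sx)$; this is well defined because $\frak a^n a_i x\subseteq\frak a^{n+1}x=0$, and $\Ker\phi=(0:_M\frak a)$. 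Thus $(0:_M\frak a^{n+1})/(0:_M\frak a)$ embeds into $\big((0:_M\frak a^n)\big)^s$, which lies in $\Ss$ by the inductive hypothesis and closure under finite direct sums; closure under submodules then places this quotient in $\Ss$. The exact sequence
$$0\To (0:_M\frak a)\To (0:_M\frak a^{n+1})\To (0:_M\frak a^{n+1})/(0:_M\frak a)\To 0$$
has both ends in $\Ss$, so the Serre property gives $(0:_M\frak a^{n+1})\in\Ss$. Finally, $\Gamma_{\frak a}(M)=M$ means $M=\bigcup_n(0:_M\frak a^n)=\varinjlim_n(0:_M\frak a^n)$, a direct limit of modules in $\Ss$, and since a torsion subcategory is closed under direct limits, $M\in\Ss$.

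I expect the main obstacle to be the inductive step of (ii): one must control the jump from $(0:_M\frak a^{n+1})$ to $(0:_M\frak a^n)$, and the point is that the map $\phi$ assembled from the generators of $\frak a$ converts this jump, modulo $(0:_M\frak a)$, into a submodule of a \emph{finite} direct sum of the previous stage. Once this embedding is in hand, the Serre closure handles the extension and the direct-limit closure of a torsion subcategory handles the passage to the full module $M$.
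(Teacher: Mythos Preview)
Your proof is correct. Part (i) is essentially identical to the paper's argument. For part (ii), however, the paper takes a different route: rather than proving inductively that $(0:_M\frak a^n)\in\Ss$ for all $n$, it observes that every finitely generated submodule $N\subseteq M$ satisfies $\Gamma_{\frak a}(N)=N$ and $(0:_N\frak a)\in\Ss\cap R\text{-mod}$, invokes the external result [Y, Proposition~4.3] (every Serre subcategory of $R\text{-mod}$ satisfies $C_{\frak a}$ condition on $R\text{-mod}$) to conclude $N\in\Ss$, and then writes $M$ as the direct limit of its finitely generated submodules. Your approach is more self-contained, replacing the citation by the explicit embedding $\phi$ and an extension argument, and the chain $(0:_M\frak a)\subseteq(0:_M\frak a^2)\subseteq\cdots$ is a simpler directed system to work with; the paper's version is shorter on the page only because the inductive content is delegated to [Y].
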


\begin{proof}
(i) Let $\{M_i\}$ be a subclass of $\Ss_{\frak a}$. Then we show
that $\coprod M_i\in\Ss_{\frak a}$. Let $\coprod M_i=\Gamma_{\frak
a}(\coprod M_i)$ and $(0:_{\coprod M_i}\frak a)\in\Ss$. Since $\Ss$
is closed under taking submodules, there is $(0:_{M_i}\frak
a)\in\Ss$; and moreover $M_i=\Gamma_{\frak a}(M_i)$ for each $i$.
Thus $M_i\in\Ss_{\frak a}$ yields $M_i\in\Ss$ for each $i$. Now,
according to the hypothesis we have $\coprod M_i\in\Ss$ and so by
the definition of $\Ss_{\frak a}$ we have $\coprod M_i\in\Ss_{\frak
a}$.

 (ii) Let $M=\Gamma_{\frak a}(M)$ and let $(0:_M\frak a)\in\Ss$.
For every finitely generated submodule $N$ of $M$, it is
straightforward to see that $\Gamma_{\frak a}(N)=N$ and $(0:_N\frak
a)\in\Ss\cap R$-mod. Now, since $\Ss\cap R-$mod satisfies $C_{\frak
a}$ condition on $R$-mod by [Y, Proposition 4.3], we have $N\in\Ss$.
Finally, since $M$ is direct limit of its finitely generated
submodules, the assumption implies that $M\in\Ss$.
\end{proof}

\medskip

 Let $\phi:R\To S$ be a rings homomorphism. Each $S$-module $M$ can
 be considered as an $R$-module and so we set an additive and
faithful functor $\phi_\star:S$-Mod$\rightarrow R$-Mod. It is
straightforward to see that if $\phi_\star(\Ss)$ is a Serre
subcategory of $R$-Mod, then $\Ss$ is a Serre subcategory of
$S$-Mod. Moreover, the converse is valid if $\phi$ is epic. The
following theorem shows that if $\frak a$ is an ideal of $R$, then
$C_{\frak a}$ condition can be transferred via rings homomorphism.

\medskip
\begin{theorem}
Let $\phi:R\To S$ be a rings homomorphism, let $\frak a$ be an ideal
of $R$ and let $\Ss$ be a subcategory of $S$-Mod. The subcategory
$\phi_\star(\Ss)$ satisfies $C_{\frak a}$ condition if and only if
$\Ss$ satisfies $C_{\frak aS}$ condition.
\end{theorem}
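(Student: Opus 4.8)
The plan is to reduce everything to two compatibility identities that relate the $\frak a$-torsion data of an $S$-module $M$, computed over $R$ after applying $\phi_\star$, to its $\frak aS$-torsion data computed over $S$. First I would record that $(\frak aS)^n=\frak a^nS=\phi(\frak a^n)S$ for every $n$, so that for any $S$-module $M$ an element is annihilated by a power of $\frak aS$ over $S$ exactly when it is annihilated by a power of $\frak a$ over $R$; this gives $\Gamma_{\frak a}(\phi_\star M)=\phi_\star(\Gamma_{\frak aS}(M))$. By the same token, since $\frak aS$ is generated by $\phi(\frak a)$, an element is killed by $\frak aS$ over $S$ if and only if it is killed by $\frak a$ over $R$, which yields $(0:_{\phi_\star M}\frak a)=\phi_\star\big((0:_M\frak aS)\big)$. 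These two identities are routine but form the backbone of the whole argument, since they show that the pair of hypotheses $\Gamma_{\bullet}(-)=-$ and $(0:_{-}\bullet)\in\,?$ is preserved verbatim under $\phi_\star$.

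For the direction ``$\phi_\star(\Ss)$ satisfies $C_{\frak a}$ $\Rightarrow$ $\Ss$ satisfies $C_{\frak aS}$'', I would take an $S$-module $M$ with $\Gamma_{\frak aS}(M)=M$ and $(0:_M\frak aS)\in\Ss$. Applying the two identities, $\phi_\star M$ is $\frak a$-torsion and $(0:_{\phi_\star M}\frak a)=\phi_\star\big((0:_M\frak aS)\big)\in\phi_\star(\Ss)$, so the hypothesis forces $\phi_\star M\in\phi_\star(\Ss)$. It then remains to descend this membership to $M\in\Ss$, and this is where faithfulness of $\phi_\star$ enters (and, in the epic case, full faithfulness, which makes $\phi_\star$ reflect isomorphisms and hence membership in an isomorphism-closed subcategory). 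The reverse direction is symmetric: starting from an $R$-module $N$ with $\Gamma_{\frak a}(N)=N$ and $(0:_N\frak a)\in\phi_\star(\Ss)$, I would realize $N$ as $\phi_\star M$, transport the hypotheses through the same two identities to obtain $\Gamma_{\frak aS}(M)=M$ and $(0:_M\frak aS)\in\Ss$, and then invoke that $\Ss$ satisfies $C_{\frak aS}$ to conclude $M\in\Ss$, whence $N=\phi_\star M\in\phi_\star(\Ss)$.

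The main obstacle, and indeed the only nonformal point, is precisely this passage between $S$-module structures and their $R$-restrictions: producing the $S$-module $M$ with $\phi_\star M\cong N$ in the backward direction, and matching membership in $\Ss$ with membership in $\phi_\star(\Ss)$ in the forward one. Since $\phi_\star$ is merely faithful, it need not reflect isomorphisms or essential image in general, so I expect the argument to rely on the functor reflecting the relevant membership; this is automatic when $\phi$ is epic, in which case $\phi_\star$ is fully faithful with essential image exactly the $R$-modules annihilated by $\ker\phi$. Once that reflection is in hand, both implications close up immediately from the two torsion/colon identities established at the outset.
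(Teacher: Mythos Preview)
Your forward direction is exactly the paper's argument: the paper records the two identities $\Gamma_{\frak a}(M)=\Gamma_{\frak aS}(M)$ and $(0:_M\frak a)=(0:_M\frak aS)$ for an $S$-module $M$, applies the $C_{\frak a}$ hypothesis to conclude $M\in\phi_\star(\Ss)$, and then writes ``and so $M\in\Ss$'' with no further comment. The descent you worry about is treated as a tautology, not as a reflection property: $\phi_\star(\Ss)$ is taken to be literally the class of modules in $\Ss$ regarded as $R$-modules, so $\phi_\star M\in\phi_\star(\Ss)$ just \emph{is} the statement $M\in\Ss$. No faithfulness or fullness is invoked.

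For the converse the paper says only ``The proof of converse is the similar.'' Your hesitation here is well placed. If one reads ``$\phi_\star(\Ss)$ satisfies $C_{\frak a}$'' in the sense of the paper's own Definition~2.1, namely tested against \emph{every} $R$-module, then the converse is false. Take $\phi:\Z\to\Z/2\Z$, $\frak a=2\Z$, and $\Ss=S\text{-Mod}$. Then $\frak aS=0$, so $\Ss$ satisfies $C_{\frak aS}$ trivially; but $\phi_\star(\Ss)$ consists of the abelian groups annihilated by $2$, and $N=\Z/4\Z$ satisfies $\Gamma_{\frak a}(N)=N$ and $(0:_N2)\cong\Z/2\Z\in\phi_\star(\Ss)$ while $N\notin\phi_\star(\Ss)$. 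So the obstacle you identified --- producing an $S$-module $M$ with $\phi_\star M\cong N$ --- is genuine and cannot be overcome in general.

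The intended reading, consistent with the paper's one-line proof, is that the $C_{\frak a}$ condition for $\phi_\star(\Ss)$ is only being tested on $R$-modules lying in the image of $\phi_\star$ (equivalently, on $S$-modules viewed over $R$). Under that reading both directions are immediate from your two identities, and neither the descent step nor any appeal to faithfulness is needed: the passage between $M\in\Ss$ and $\phi_\star M\in\phi_\star(\Ss)$ is definitional. Your plan is therefore correct once the statement is interpreted this way; the extra reflection machinery you propose is unnecessary, and the counterexample above shows it could not rescue the literal statement anyway.
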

\begin{proof}
Let $M$ be an $S$-module such that $M=\Gamma_{\frak aS}(M)$ and
$(0:_M\frak aS)\in\Ss$. It is clear that $M=\Gamma_{\frak a}(M)$ and
$(0:_M\frak aS)=(0:_M\frak a)\in\phi_\star(\Ss)$. Since
$\phi_\star(\Ss)$ satisfies $C_{\frak a}$ condition, we have
$M\in\phi_\star(\Ss)$ and so $M\in\Ss$. The proof of converse is the
similar.
\end{proof}

\medskip

Let $\phi:R\To S$ be a rings homomorphism. Then there is an additive
functor $-\otimes_RS:R$-Mod$\longrightarrow S$-Mod. For a a
subcategory  $\Ss$ of $R$-Mod, we define $\Ss\otimes
S=\{M\otimes_RS|M\in\Ss\}$ which is a class of $S$-Modules. We now
have the following theorem.



\medskip
\begin{theorem}
If $\phi:R\To S$ be a faithfully flat  rings homomorphism, let
$\frak a$ be an ideal of $R$ and let $\Ss$ be a subcategory of
$R$-mod. If $\Ss\otimes S$ satisfies $C_{\frak aS}$ condition, then
$\Ss$ satisfies $C_\frak a$ condition.
\end{theorem}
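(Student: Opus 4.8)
The plan is to reduce the statement for $\Ss$ over $R$ to the given hypothesis for $\Ss\otimes S$ over $S$ by base change along $\phi$, using flatness to push the hypotheses upward and faithful flatness to bring the conclusion back down. So let $M$ be an $R$-module with $\Gamma_{\frak a}(M)=M$ and $(0:_M\frak a)\in\Ss$; the goal is to show $M\in\Ss$. I would set $N=M\otimes_RS$, regarded as an $S$-module, and devote the first half of the argument to checking that $N$ meets the two hypotheses of the $C_{\frak aS}$ condition.

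Both verifications rest on flat base change. Since $R$ is noetherian, each $R/\frak a^n$ is finitely presented, so for flat $S$ the canonical map $\Hom_R(R/\frak a^n,M)\otimes_RS\To\Hom_S(S/\frak a^nS,N)$ is an isomorphism. Taking $n=1$ and using $S/\frak aS\cong R/\frak a\otimes_RS$ gives
$$(0:_N\frak aS)\cong(0:_M\frak a)\otimes_RS,$$
so $(0:_N\frak aS)\in\Ss\otimes S$ directly from the definition of $\Ss\otimes S$. Passing to the direct limit over $n$ (tensoring with the flat module $S$ commutes with direct limits, and $\frak a^nS=(\frak aS)^n$) yields $\Gamma_{\frak a}(M)\otimes_RS\cong\Gamma_{\frak aS}(N)$; since $\Gamma_{\frak a}(M)=M$ this says $\Gamma_{\frak aS}(N)=N$. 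Thus $N$ satisfies $\Gamma_{\frak aS}(N)=N$ and $(0:_N\frak aS)\in\Ss\otimes S$, and the assumption that $\Ss\otimes S$ satisfies $C_{\frak aS}$ gives $N\in\Ss\otimes S$; that is, $N\cong L\otimes_RS$ for some $L\in\Ss$.

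It remains to descend, and this is the heart of the matter. First, faithful flatness forces $M$ to be finitely generated: choosing finitely many elements of $M$ whose images generate the finitely generated $S$-module $N=M\otimes_RS$ produces a finitely generated submodule $M'\subseteq M$ with $(M/M')\otimes_RS=0$, whence $M/M'=0$ and $M\in R\text{-mod}$. The genuine obstacle is to pass from $M\otimes_RS\cong L\otimes_RS$ (with $L\in\Ss$) to $M\in\Ss$: a bare isomorphism after base change carries no descent datum, so one cannot simply conclude $M\cong L$. I would resolve this through supports. For finitely generated modules one has $\Supp_S(M\otimes_RS)=(\Spec\phi)^{-1}(\Supp_RM)$, using that each induced map $R_{\frak p}\To S_{\frak q}$ is flat local, hence faithfully flat; and $\Spec\phi$ is surjective because $\phi$ is faithfully flat. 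Consequently $M\otimes_RS\cong L\otimes_RS$ forces $\Supp_RM=\Supp_RL$, and since a Serre subcategory of $R$-mod is determined by a specialization-closed set of primes, equality of supports gives $M\in\Ss$. The step I expect to cost the most is precisely this descent of membership — the assertion that $\Ss$ can be detected after faithfully flat base change — which is clean exactly when $\Ss$ is support-defined, and the support computation above is the route I would pursue.
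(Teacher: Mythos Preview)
Your base-change half is correct and parallels the paper: flatness gives $(0:_N\frak aS)\cong(0:_M\frak a)\otimes_RS\in\Ss\otimes S$ and $\Gamma_{\frak aS}(N)=N$, so the $C_{\frak aS}$ hypothesis produces $L\in\Ss$ with $M\otimes_RS\cong L\otimes_RS$. Your auxiliary step showing $M$ is finitely generated via faithful flatness is also fine.

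The gap is in the descent. To pass from $\Supp_RM=\Supp_RL$ to $M\in\Ss$ you invoke Gabriel's classification of Serre subcategories of $R\text{-}\modu$ by specialization-closed subsets of $\Spec R$. But the theorem only assumes $\Ss$ is a \emph{subcategory} of $R\text{-}\modu$, not a Serre subcategory; for an arbitrary isomorphism-closed class, sharing a support with a member says nothing about membership (take $\Ss=\{0,L\}$ for a single fixed $L$). So your argument proves the statement only under an additional Serre hypothesis that is not present. The paper's route at this point is different and avoids that hypothesis: since $L$ is finitely presented and $S$ is flat, the canonical map
\[
\omega:\Hom_R(L,M)\otimes_RS\lrt\Hom_S(L\otimes_RS,\,M\otimes_RS)
\]
is an isomorphism, and the paper asserts one can choose $u\in\Hom_R(L,M)$ with $u\otimes 1_S$ equal to the given $S$-isomorphism; faithful flatness then makes $u$ itself an isomorphism, so $M\cong L\in\Ss$ with no structural assumption on $\Ss$ beyond closure under isomorphism. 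Your worry that a bare isomorphism after base change carries no descent datum is legitimate---indeed, the paper's selection of a \emph{pure} tensor $u\otimes 1$ as a preimage under $\omega$ is not automatic from surjectivity alone---but that is the line the paper pursues, and it is exactly where your argument diverges from it.
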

\begin{proof}
Let $M$ be an $R$-module such that $M=\Gamma_{\frak a}(M)$ and
$(0:_M\frak a)\in\Ss$. It is evident to see that $M\otimes_R
S=\Gamma_{\frak aS}(M\otimes_R S)$ and $(0:_{M\otimes_R S}\frak
aS)\in\Ss\otimes S$. Now, since $\Ss\otimes S$ satisfies $C_{\frak
aS}$ condition, $M\otimes_R S\in\Ss\otimes S$ and so there exists
$N\in\Ss$ such that $M\otimes_R S=N\otimes_R S$. As $S$ is a flat
$R$-module and $N$ is a finitely generated $R$-module, there exists
a canonical isomorphism of $S$-modules
$$\omega:\Hom_R(N,M)\otimes_RS\To \Hom_S(N\otimes_R S,M\otimes_R
S).$$ Thus there exists an $R$-homomorphism $u:N\To M$ such that
$\omega(u\otimes 1)=u\otimes 1_S=1_{N\otimes_R S}$. Now since $S$ is
a faithfully flat $R$-module, $u$ is isomorphism and so $M\in\Ss$.
\end{proof}

\medskip
\section{applications to local cohomology}
\begin{proposition}\label{3.1}
Let $\frak a$ be an ideal of $R$, let $\Ss$ be a Serre subcategory
satisfying $C_{\frak a}$ condition and let $M\in\Ss$ be a finitely
generated $R$-module. Then $H_{\frak a}^i(M)\in\Ss$ for each $i$.
\end{proposition}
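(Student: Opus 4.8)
The plan is to reduce the membership $H_{\frak a}^i(M)\in\Ss$ to a statement about a single $\Hom$-module and then feed that module through a Grothendieck spectral sequence together with Lemma \ref{Ext}. First I would record the standard fact that every local cohomology module $H_{\frak a}^i(M)$ is $\frak a$-torsion, that is, $\Gamma_{\frak a}(H_{\frak a}^i(M))=H_{\frak a}^i(M)$. Since $\Ss$ satisfies $C_{\frak a}$ condition, it then suffices to prove that $(0:_{H_{\frak a}^i(M)}\frak a)\cong\Hom_R(R/\frak a,H_{\frak a}^i(M))\in\Ss$ for every $i$, and I would do this by induction on $i$. The base case $i=0$ is immediate, as $H_{\frak a}^0(M)=\Gamma_{\frak a}(M)$ is a submodule of $M\in\Ss$.

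The main device is the Grothendieck spectral sequence attached to the equality of functors $\Hom_R(R/\frak a,-)=\Hom_R(R/\frak a,\Gamma_{\frak a}(-))$ (any map out of $R/\frak a$ has $\frak a$-torsion image), together with the fact that over the noetherian ring $R$ the functor $\Gamma_{\frak a}$ sends injectives to injectives and hence to $\Hom_R(R/\frak a,-)$-acyclics. This yields
$$E_2^{p,q}=\Ext_R^p(R/\frak a,H_{\frak a}^q(M))\Longrightarrow \Ext_R^{p+q}(R/\frak a,M).$$
Two inputs then come for free. Since $M\in\Ss$, Lemma \ref{Ext} places the abutment $\Ext_R^{p+q}(R/\frak a,M)$ in $\Ss$, so every subquotient, in particular each $E_\infty^{p,q}$, lies in $\Ss$. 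By the induction hypothesis $H_{\frak a}^q(M)\in\Ss$ for each $q<i$, whence Lemma \ref{Ext} again gives $E_2^{p,q}=\Ext_R^p(R/\frak a,H_{\frak a}^q(M))\in\Ss$ for all $q<i$, and the same holds for every later page as these are subquotients.

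It remains to extract the corner term $E_2^{0,i}=\Hom_R(R/\frak a,H_{\frak a}^i(M))$. At position $(0,i)$ there are no incoming differentials, since those would originate in negative columns, so the pages form a descending chain $E_2^{0,i}\supseteq E_3^{0,i}\supseteq\cdots\supseteq E_\infty^{0,i}$ with $E_{r+1}^{0,i}=\Ker(d_r)$ and successive quotient $E_r^{0,i}/E_{r+1}^{0,i}\cong\im(d_r)$ a submodule of $E_r^{r,i-r+1}$. For $r\geq 2$ the second index satisfies $i-r+1<i$, so $E_r^{r,i-r+1}$ is a subquotient of $\Ext_R^r(R/\frak a,H_{\frak a}^{i-r+1}(M))\in\Ss$, and hence each quotient $E_r^{0,i}/E_{r+1}^{0,i}\in\Ss$. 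The chain is finite, because the differentials out of column $0$ vanish once $i-r+1<0$, giving $E_r^{0,i}=E_\infty^{0,i}$. Walking back up the chain and using that $\Ss$ is closed under extensions, I would conclude $E_2^{0,i}=\Hom_R(R/\frak a,H_{\frak a}^i(M))\in\Ss$; then $C_{\frak a}$ condition yields $H_{\frak a}^i(M)\in\Ss$, completing the induction.

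The hard part is precisely the bookkeeping of the last paragraph: one must secure the corner term $E_2^{0,i}$ itself, not merely its stable quotient $E_\infty^{0,i}$, and this is exactly where closure of $\Ss$ under both subquotients and extensions, combined with the induction hypothesis on the lower local cohomology modules, is used. I would also stress that the argument invokes $C_{\frak a}$ condition only for the given ideal $\frak a$, never for its subideals; this matters, since $C_{\frak a}$ condition is not known to descend to subideals, so a naive dévissage on the number of generators of $\frak a$ (reducing to the easy principal case, where $H_{(x)}^1(M)$ is handled by hand via $\Hom_R(R/(x),-)$) would not obviously go through. Finally, I note that the finite generation of $M$ is never used in this approach, so the conclusion in fact holds for every $M\in\Ss$.
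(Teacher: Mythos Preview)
Your proof is correct, but the paper argues quite differently. The paper's proof is an elementary d\'evissage on $M$: after reducing to $\Gamma_{\frak a}(M)=0$, finite generation and prime avoidance supply an $M$-regular element $x\in\frak a$, and the long exact sequence of $0\to M\xrightarrow{x} M\to M/xM\to 0$ together with the induction hypothesis applied to $M/xM\in\Ss$ gives $(0:_{H_{\frak a}^i(M)}x)\in\Ss$, hence $(0:_{H_{\frak a}^i(M)}\frak a)\in\Ss$, and then $C_{\frak a}$ finishes. Your approach instead pulls the conclusion out of the Grothendieck spectral sequence $\Ext_R^p(R/\frak a,H_{\frak a}^q(M))\Rightarrow\Ext_R^{p+q}(R/\frak a,M)$, using Lemma~\ref{Ext} both for the abutment and for the lower rows, and a finite extension argument at the corner $(0,i)$; this is heavier machinery but, as you correctly note, never uses the finite generation of $M$, so it yields the stronger statement that $H_{\frak a}^i(M)\in\Ss$ for every $M\in\Ss$. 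The paper's argument genuinely needs finite generation, both to produce the regular element and to keep $M/xM$ within the hypotheses. Your closing remark about avoiding a d\'evissage on the generators of $\frak a$ is somewhat beside the point: the paper does not attempt that either, but rather d\'evissage on $M$.
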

\begin{proof}
We proceed by induction on $i$. If $i=0$, then the result is clear
as $M\in\Ss$. Let $i>0$ and without loss of generality let
$\Gamma_{\frak a}(M)=0$. The, there exists an element $x\in\frak
a\setminus Z(M)$ and an exact sequence $0\To M\stackrel{x.}\To M\To
M/xM\To 0$. Applying the functor $H_{\frak a}^ i(-)$ yields the
following exact sequence
$$H_{\frak a}^{i-1}(M/xM)\To H_{\frak a}^i(M)\stackrel{x.}\To H_{\frak
a}^i(M).$$ the induction hypothesis implies that $H_{\frak
a}^{i-1}(M/xM)\in\Ss$ and so $(0:_{H_{\frak a}^i(M)}x)\in\Ss$.
Therefor, since $\Ss$ is Serre, $(0:_{H_{\frak a}^i(M)}\frak
a)\in\Ss$. Now, since $\Ss$ satisfies $C_{\frak a}$ condition, we
have $H_{\frak a}^i(M)\in\Ss$.
\end{proof}

\medskip
\begin{corollary}
Let $\frak a$ and $\frak b$ be two ideals of $R$, let $\Ss$ be a
Serre subcategory satisfying $C_{\frak a}$ and  $C_{\frak b}$
condition and let $M\in\Ss$ be a finitely generated $R$-module. Then
all modules $H_{\frak a}^i(M), H_{\frak b}^i(M), H_{\frak a+\frak
b}^i(M), H_{\frak a\frak b}^i(M)$ lye in $\Ss$ for all $i$.
\end{corollary}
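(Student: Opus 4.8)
The plan is to reduce everything to Proposition \ref{3.1}, which already settles the single-ideal case, and to use Theorem \ref{intpro} to enlarge the list of ideals for which that proposition applies. The point is that each of the four ideals $\frak a$, $\frak b$, $\frak a+\frak b$, $\frak a\frak b$ appearing in the statement must be one for which $\Ss$ satisfies the corresponding $C$-condition.

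First I would invoke Theorem \ref{intpro}. Since $\Ss$ is Serre and satisfies $C_{\frak a}$ and $C_{\frak b}$ conditions, we are in situation (iii) of that theorem; hence by the equivalence (iii)$\Rightarrow$(ii) the subcategory $\Ss$ also satisfies $C_{\frak a+\frak b}$ and $C_{\frak a\frak b}$ conditions. In other words, $\Ss$ satisfies $C_{\frak c}$ condition for every ideal $\frak c\in\{\frak a,\frak b,\frak a+\frak b,\frak a\frak b\}$. Next, for each such $\frak c$ I would apply Proposition \ref{3.1} to the finitely generated module $M\in\Ss$: since $\Ss$ is Serre, satisfies $C_{\frak c}$ condition, and $M\in\Ss$ is finitely generated, the proposition yields $H_{\frak c}^i(M)\in\Ss$ for every $i$. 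Letting $\frak c$ range over the four ideals gives all of $H_{\frak a}^i(M)$, $H_{\frak b}^i(M)$, $H_{\frak a+\frak b}^i(M)$ and $H_{\frak a\frak b}^i(M)$ in $\Ss$, as required.

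There is essentially no genuine obstacle here: the corollary is a direct combination of Theorem \ref{intpro} (to upgrade the two given conditions to all four) with Proposition \ref{3.1} (to pass from $C_{\frak c}$ condition to membership of the local cohomology modules in $\Ss$). The only detail worth flagging is that one must read off the $C_{\frak a\frak b}$ condition directly from statement (ii) of Theorem \ref{intpro}, rather than only the $C_{\frak a\cap\frak b}$ condition of (i); since (ii) is one of the equivalent assertions, no extra passage through Proposition \ref{rad} is needed, and the finite generation of $M$ is used precisely to license each application of Proposition \ref{3.1}.
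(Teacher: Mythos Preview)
Your proposal is correct and follows essentially the same argument as the paper: invoke Theorem \ref{intpro} to obtain the $C_{\frak a+\frak b}$ and $C_{\frak a\frak b}$ conditions from the given $C_{\frak a}$ and $C_{\frak b}$ conditions, and then apply Proposition \ref{3.1} to each of the four ideals. The paper's proof is just a two-sentence version of what you wrote.
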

\begin{proof}
According to Theorem \ref{intpro}, the subcatgory $\Ss$ satisfies
$C_{\frak a+\frak b}$ and $C_{\frak a\frak b}$ conditions. Now, the
result follows by Proposition \ref{3.1}.
\end{proof}

\medskip
\begin{corollary}
If $\Ss$ is a torsion subcategory and $M\in\Ss$, then $H_{\frak
a}^i(M)\in\Ss$ for each $i$.
\end{corollary}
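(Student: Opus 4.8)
The plan is to bootstrap the finitely generated case, which is already handled in Proposition \ref{3.1}, to an arbitrary $M\in\Ss$ by exploiting the interaction between local cohomology and direct limits. First I would observe that, since $\Ss$ is a torsion subcategory, the preceding theorem guarantees that $\Ss$ satisfies $C_{\frak a}$ condition; hence every hypothesis of Proposition \ref{3.1} is in force for any finitely generated member of $\Ss$. This reduces the task to controlling $H_{\frak a}^i$ on finitely generated pieces and then reassembling.

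Next I would write $M$ as the direct limit $M=\varinjlim_\lambda M_\lambda$ of its finitely generated submodules $M_\lambda$. Because $\Ss$ is Serre, it is closed under taking submodules, so each $M_\lambda$ lies in $\Ss$; being also finitely generated, Proposition \ref{3.1} applies and yields $H_{\frak a}^i(M_\lambda)\in\Ss$ for every $i$ and every $\lambda$. The key step is then to invoke that local cohomology commutes with direct limits over the noetherian ring $R$, so that $H_{\frak a}^i(M)\cong\varinjlim_\lambda H_{\frak a}^i(M_\lambda)$, exhibiting $H_{\frak a}^i(M)$ as a direct limit of modules in $\Ss$.

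To conclude, I would use the remark recorded just before the torsion-subcategory theorem: a torsion subcategory, being Serre and closed under arbitrary direct sums, is closed under direct limits (a direct limit being a quotient of a direct sum). Applying this to the system $\{H_{\frak a}^i(M_\lambda)\}$ gives $H_{\frak a}^i(M)\in\Ss$, as required.

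The main obstacle is really just the identification $H_{\frak a}^i(\varinjlim_\lambda M_\lambda)\cong\varinjlim_\lambda H_{\frak a}^i(M_\lambda)$: it rests on $R$ being noetherian, which ensures that $\Gamma_{\frak a}$ and each of its right derived functors commute with direct limits, and it is precisely what lets us transfer the finitely generated conclusion of Proposition \ref{3.1} to the general module $M$. Everything else is a direct appeal to the closure properties of $\Ss$ and to results already established in the paper.
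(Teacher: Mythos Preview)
Your proposal is correct and follows essentially the same approach as the paper. The paper's proof is a terse two-line version of yours: it says ``without loss of generality, we may assume that $M$ is finitely generated'' (which encodes exactly your direct-limit reduction, using that local cohomology commutes with direct limits and that torsion subcategories are closed under them) and then invokes Proposition \ref{3.1} together with the torsion-subcategory theorem (your Theorem 2.17) to finish.
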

\begin{proof}
Without loss of generality, we may assume that $M$ is finitely
generated and so the result follows by the previous proposition and
Theorem 2.17.
\end{proof}

\medskip

\begin{proposition}
Let $\frak m_1,\dots,\frak m_t$ be maximal ideals and $\frak a$ be
an arbitrary ideal of $R$, let $\Ss$ be a Serre subcategory
satisfying $C_{\frak a}$ condition and let $n$ be a non-negative
integer such that $\Supp(H_{\frak a}^i(M))\subseteq \{\frak
m_1,\dots,\frak m_t\}$ for all $i\leq n$ (note that $n$ may be
$\infty$). Then $H_{\frak a}^i(M)\in\Ss$ for all $i\leq n$.
\end{proposition}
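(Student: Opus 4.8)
The plan is to prove this statement by induction on $n$, mirroring the strategy used in Proposition \ref{3.1} but now tracking the support condition instead of finite generation. First I would handle the base case $i=0$: since $\Supp(\Gamma_{\frak a}(M))\subseteq\{\frak m_1,\dots,\frak m_t\}$, every prime in the support is maximal, so $\Gamma_{\frak a}(M)$ is a module whose support consists of finitely many maximal ideals. Such a module satisfies $\Gamma_{\frak b}=\mathrm{id}$ for $\frak b=\frak m_1\cap\cdots\cap\frak m_t$, and $(0:_{H_{\frak a}^0(M)}\frak b)$ is supported on the maximal ideals, hence is a module of finite length-type behavior that I would argue lies in $\Ss$; the cleanest route is to reduce to showing that any module supported on a single maximal ideal $\frak m$ satisfying $\Gamma_{\frak m}=\mathrm{id}$ and $(0:_{\frak m})\in\Ss$ forces membership in $\Ss$, which is exactly the $C_{\frak m}$ condition.

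For the inductive step I would reduce, as in Proposition \ref{3.1}, to the case $\Gamma_{\frak a}(M)=0$ by passing to $M/\Gamma_{\frak a}(M)$ (this does not change $H_{\frak a}^i$ for $i\geq 1$). Then I would choose $x\in\frak a\setminus Z(M)$ and apply $H_{\frak a}^i(-)$ to $0\To M\stackrel{x\cdot}\To M\To M/xM\To 0$, obtaining
$$H_{\frak a}^{i-1}(M/xM)\To H_{\frak a}^i(M)\stackrel{x\cdot}\To H_{\frak a}^i(M).$$
The support condition $\Supp(H_{\frak a}^j(M/xM))\subseteq\{\frak m_1,\dots,\frak m_t\}$ for the relevant range follows from the long exact sequence together with the hypothesis on $M$, since $\Supp$ of a subquotient is contained in the union of the supports. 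The induction hypothesis (applied to $M/xM$, whose local cohomology vanishes in degrees beyond the range by the surjection from $H_{\frak a}^{i-1}(M)$) then gives $H_{\frak a}^{i-1}(M/xM)\in\Ss$, whence $(0:_{H_{\frak a}^i(M)}x)\in\Ss$.

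The crucial step, and the one I expect to be the main obstacle, is upgrading from $(0:_{H_{\frak a}^i(M)}x)\in\Ss$ and the support condition to $H_{\frak a}^i(M)\in\Ss$ via the $C_{\frak a}$ hypothesis. In Proposition \ref{3.1} one uses that $\Ss$ is Serre to pass from killing a single $x$ to killing all of $\frak a$, giving $(0:_{H_{\frak a}^i(M)}\frak a)\in\Ss$, and then $C_{\frak a}$ finishes. Here the support restriction to maximal ideals is what makes the argument genuinely stronger than requiring $M$ finitely generated, and I would need to verify that the Serre-closure argument producing $(0:_{H_{\frak a}^i(M)}\frak a)\in\Ss$ still goes through knowing only the support hypothesis rather than finite generation; the point is that $H_{\frak a}^i(M)$ is $\frak a$-torsion with support in finitely many maximal ideals, so its annihilator behavior is controlled. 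Once $(0:_{H_{\frak a}^i(M)}\frak a)\in\Ss$ is established, the equality $\Gamma_{\frak a}(H_{\frak a}^i(M))=H_{\frak a}^i(M)$ is automatic and $C_{\frak a}$ condition yields $H_{\frak a}^i(M)\in\Ss$, completing the induction.
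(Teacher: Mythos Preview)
Your inductive skeleton matches the paper's: induct on $i$, handle $i=0$ separately, and for $i>0$ reduce to $\Gamma_{\frak a}(M)=0$, pick $x\in\frak a\setminus Z(M)$, and use the long exact sequence from $0\to M\xrightarrow{x}M\to M/xM\to 0$ exactly as in Proposition~\ref{3.1}. Two points, however, diverge from the paper and one of them is a genuine gap in your argument.

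\textbf{Base case.} You try to reduce to the $C_{\frak m}$ condition for the maximal ideals $\frak m_j$, but this is not a hypothesis: only $C_{\frak a}$ is assumed. The paper instead uses (implicitly) that $M$ is finitely generated---note the phrase ``all finitely generated $R$-modules'' in the paper's inductive hypothesis---so that $\Gamma_{\frak a}(M)$, being finitely generated with support in $\{\frak m_1,\dots,\frak m_t\}$, has finite length, and the paper simply asserts that finite-length modules lie in $\Ss$. Your route via $C_{\frak m}$ does not close, and your vague ``finite length-type behavior that I would argue lies in $\Ss$'' is exactly the point the paper takes for granted.

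\textbf{Inductive step.} Your worry at the end is misplaced. Once you have $(0:_{H_{\frak a}^i(M)}x)\in\Ss$, the passage to $(0:_{H_{\frak a}^i(M)}\frak a)\in\Ss$ is immediate from the inclusion $(0:_{H_{\frak a}^i(M)}\frak a)\subseteq(0:_{H_{\frak a}^i(M)}x)$ (since $x\in\frak a$) and closure of $\Ss$ under submodules; no finiteness or support information is needed here, exactly as in Proposition~\ref{3.1}. On the other hand, your observation that one must check $\Supp(H_{\frak a}^{j}(M/xM))\subseteq\{\frak m_1,\dots,\frak m_t\}$ for $j<i$ via the long exact sequence is correct and is something the paper glosses over with ``a similar proof''.
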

\begin{proof}
We proceed by induction on $i$. If $i=0$, then $\Gamma_{\frak a}(M)$
is finite length and so $\Gamma_{\frak a}(M)\in\Ss$. Let $i>0$ and
suppose inductively that the result has been proved for all values
smaller than $i$ and all finitely generated $R$-modules and so we
prove it for $i$. Now the result follows by a similar proof
mentioned in Proposition \ref{3.1}.
 \end{proof}

\medskip

\begin{corollary}
Let $\frak m$ be a maximal ideal, let $\Ss$ be a Serre subcategory
satisfying $C_{\frak m}$ condition and let $M$ be a finitely
generated $R$-module. Then $H_{\frak m}^i(M)\in\Ss$ for each $i$.
\end{corollary}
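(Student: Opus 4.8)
The plan is to derive this as an immediate application of the preceding Proposition, specializing its hypotheses to the single maximal ideal $\frak m$. Concretely, I would take $\frak a = \frak m$, let the finite list of maximal ideals consist of the single ideal $\frak m$ (so $t=1$ and $\frak m_1 = \frak m$), and set $n = \infty$. With these choices the conclusion of that Proposition is exactly the assertion of the corollary, so the only thing that genuinely requires verification is the support hypothesis, namely that $\Supp(H_{\frak m}^i(M)) \subseteq \{\frak m\}$ for every $i$.

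First I would record the standard fact that each local cohomology module $H_{\frak m}^i(M) \cong \varinjlim_{n} \Ext_R^i(R/\frak m^n, M)$ is $\frak m$-power torsion; equivalently $\Gamma_{\frak m}(H_{\frak m}^i(M)) = H_{\frak m}^i(M)$, since every element is annihilated by some power of $\frak m$. Consequently every prime $\frak p$ lying in the support of $H_{\frak m}^i(M)$ must contain $\frak m$, and since $\frak m$ is maximal this forces $\frak p = \frak m$. Hence $\Supp(H_{\frak m}^i(M)) \subseteq V(\frak m) = \{\frak m\}$ for all $i$, which is precisely the support condition demanded by the Proposition.

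With the support condition established, I would invoke the preceding Proposition: since $\Ss$ satisfies the $C_{\frak m}$ condition by hypothesis and $M$ is finitely generated, applying it with $\frak a = \frak m$, the single maximal ideal $\frak m$, and $n = \infty$ yields $H_{\frak m}^i(M) \in \Ss$ for all $i$. There is essentially no obstacle here: the entire content is carried by the Proposition, and the only non-formal step is the elementary observation that $\frak m$-local cohomology is supported at the single point $\frak m$ because $\frak m$ is maximal. That is exactly what upgrades the general support hypothesis of the Proposition into something automatic, so no additional assumption is needed.
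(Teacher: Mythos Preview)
Your proposal is correct and matches the paper's approach exactly: the paper's proof consists of the single line ``The result follows by the previous proposition,'' and you have simply spelled out the obvious specialization $\frak a=\frak m$, $t=1$, $n=\infty$, together with the routine verification that $\Supp(H_{\frak m}^i(M))\subseteq\{\frak m\}$.
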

\begin{proof}
The result follows by the previous proposition.

\end{proof}

\medskip

\begin{proposition}
Let $\frak a$ be an ideal of $R$, let $\Ss$ be a Serre subcategory
satisfying $C_{\frak a}$ condition. Let $M$ be a finitely generated
$R$-module and $n$ be a non-negative integer such that $H_{\frak
a}^i(M)$ is minimax for all $i<n$. Then $\Gamma_{\frak m}(H_{\frak
a}^n(M))\in\Ss$ for every maximal ideal $\frak m$ of $R$.
\end{proposition}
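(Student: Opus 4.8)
The plan is to apply the $C_{\frak a}$ condition to the module $X=\Gamma_{\frak m}(H_{\frak a}^n(M))$. Since $H_{\frak a}^n(M)$ is $\frak a$-torsion, so is its submodule $X$, so $\Gamma_{\frak a}(X)=X$ holds automatically; thus by the definition of $C_{\frak a}$ it suffices to show $(0:_X\frak a)\in\Ss$. A direct check gives $(0:_X\frak a)=\Gamma_{\frak m}\big(\Hom_R(R/\frak a,H_{\frak a}^n(M))\big)$, so the whole problem reduces to controlling $\Hom_R(R/\frak a,H_{\frak a}^n(M))=(0:_{H_{\frak a}^n(M)}\frak a)$. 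The heart of the argument is therefore the claim that $\Hom_R(R/\frak a,H_{\frak a}^n(M))$ is minimax.

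To prove this claim I would argue by induction on $n$, imitating the devissage of Proposition \ref{3.1}. For $n=0$ one has $\Hom_R(R/\frak a,\Gamma_{\frak a}(M))\cong\Hom_R(R/\frak a,M)$, which is finitely generated because $M$ is, hence minimax. For $n\ge 1$, replacing $M$ by $M/\Gamma_{\frak a}(M)$ leaves $H_{\frak a}^i(M)$ unchanged for $i\ge 1$ and preserves both the hypotheses and the conclusion, so I may assume $\Gamma_{\frak a}(M)=0$ and choose $x\in\frak a$ that is a nonzerodivisor on $M$ (prime avoidance, using $\frak a\not\subseteq\frak p$ for every $\frak p\in\Ass M$). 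The sequence $0\To M\st{x.}\To M\To M/xM\To 0$ gives a long exact sequence; reading it off shows $H_{\frak a}^i(M/xM)$ is minimax for all $i<n-1$, so the induction hypothesis applied to $M/xM$ yields that $\Hom_R(R/\frak a,H_{\frak a}^{n-1}(M/xM))$ is minimax. The connecting map $\delta$ identifies $(0:_{H_{\frak a}^n(M)}x)$ with a quotient $\im\delta$ of $H_{\frak a}^{n-1}(M/xM)$ whose kernel is a quotient of the minimax module $H_{\frak a}^{n-1}(M)$. Applying $\Hom_R(R/\frak a,-)$ to $0\To\ker\delta\To H_{\frak a}^{n-1}(M/xM)\To\im\delta\To 0$ and using Lemma \ref{Ext} for the Serre subcategory of minimax modules (so that $\Ext_R^1(R/\frak a,\ker\delta)$ is minimax) squeezes $\Hom_R(R/\frak a,\im\delta)$ between minimax modules, hence it is minimax. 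Finally $x\in\frak a$ forces $(0:_{H_{\frak a}^n(M)}\frak a)=(0:_{\im\delta}\frak a)=\Hom_R(R/\frak a,\im\delta)$, completing the induction.

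With the claim established, $(0:_X\frak a)=\Gamma_{\frak m}\big(\Hom_R(R/\frak a,H_{\frak a}^n(M))\big)$ is a minimax module supported at $\{\frak m\}$, and a minimax module supported at a single maximal ideal is Artinian; being of this form it lies in $\Ss$, just as the finite length modules are placed in $\Ss$ in the base case of the preceding proposition. Then $C_{\frak a}$ yields $X=\Gamma_{\frak m}(H_{\frak a}^n(M))\in\Ss$, as required. I expect the main obstacle to be the inductive minimaxness claim of the second paragraph: the subtlety is that the induction hypothesis controls only $\Hom_R(R/\frak a,-)$ of the lower cohomology, not that cohomology itself, so one cannot transport minimaxness naively along the surjection $H_{\frak a}^{n-1}(M/xM)\twoheadrightarrow(0:_{H_{\frak a}^n(M)}x)$ — the left exactness of $\Gamma_{\frak m}$ would block passage to the quotient — and must instead route through the connecting homomorphism and an $\Ext_R^1$ term.
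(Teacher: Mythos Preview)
Your overall strategy matches the paper's: apply $C_{\frak a}$ to $X=\Gamma_{\frak m}(H_{\frak a}^n(M))$ and reduce to showing $(0:_X\frak a)\in\Ss$ via control of $(0:_{H_{\frak a}^n(M)}\frak a)$. The difference is in how that control is obtained. The paper simply invokes [BN, Theorem 2.3] to conclude that $(0:_{H_{\frak a}^n(M)}\frak a)$ is \emph{finitely generated}; then $\Gamma_{\frak m}$ of it is a finitely generated $\frak m$-torsion module, hence of \emph{finite length}, and the paper places it in $\Ss$ on that basis. Your self-contained induction (which is correct, contrary to your own worry) only yields that $(0:_{H_{\frak a}^n(M)}\frak a)$ is \emph{minimax}, so that $(0:_X\frak a)$ is merely \emph{Artinian}, not of finite length.

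This is where the genuine gap lies, not in the induction. You assert that an Artinian module supported at $\{\frak m\}$ lies in $\Ss$ ``just as the finite length modules are placed in $\Ss$'' in the preceding proposition, but ``Artinian $\Rightarrow$ in $\Ss$'' is strictly stronger than ``finite length $\Rightarrow$ in $\Ss$'' and does not follow from it: over $R=k[[x,y]]$ with $\frak a=(x)$ and $\frak m=(x,y)$, the module $E_{R/(x)}(k)$ is Artinian, $\frak m$-torsion, and annihilated by $\frak a$, yet not of finite length. Your induction is in effect a weakened version of the argument behind [BN, Theorem 2.3]; to match the paper you must either cite [BN] for the sharper finite-generation statement, or strengthen your inductive scheme to simultaneously show that the lower $H_{\frak a}^i(M)$ are $\frak a$-cofinite, which is what forces the $\Ext_R^1(R/\frak a,\ker\delta)$ term in your squeeze to be finitely generated rather than merely minimax.
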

\begin{proof}
According to [BN, Theorem 2.3], the $R$-module $(0:_{H_{\frak
a}^n(M)}\frak a)$ is finitely generated and so $\Gamma_{\frak
m}((0:_{H_{\frak a}^n(M)}\frak a))=(0:_{\Gamma_{\frak m}(H_{\frak
a}^n(M))}\frak a)$ is finite length. Thus $(0:_{\Gamma_{\frak
m}(H_{\frak a}^n(M))}\frak a)\in\Ss$ and since $\Ss$ satisfies
$C_{\frak a}$ condition, $\Gamma_{\frak m}(H_{\frak a}^n(M))\in\Ss.$
\end{proof}

\medskip
\begin{proposition}
Let $(R,\frak m)$ be a local ring, let $\frak a$ be an ideal of $R$,
and let $\Ss$ be a Serre subcategory satisfying $C_{\frak a}$
condition. If $M$ is a finitely generated $R$-module of dimension
$n$, then $H_{\frak a}^ n(M)\in\Ss$.
\end{proposition}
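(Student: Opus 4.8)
The plan is to mimic the proof of Proposition \ref{3.1}, but to induct on $n=\dim M$ rather than on the cohomological degree, since here $M$ is not assumed to lie in $\Ss$. The structural observation that drives everything is that local cohomology modules are $\frak a$-torsion, so $\Gamma_{\frak a}(H_{\frak a}^n(M))=H_{\frak a}^n(M)$ holds automatically; consequently, by the very definition of $C_{\frak a}$ condition, it suffices to prove that the $\frak a$-socle $(0:_{H_{\frak a}^n(M)}\frak a)$ lies in $\Ss$, after which $C_{\frak a}$ condition forces $H_{\frak a}^n(M)\in\Ss$.

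For the base case $n=0$ the module $M$ has finite length, so $H_{\frak a}^0(M)=\Gamma_{\frak a}(M)$ is a finite length submodule of $M$ while $H_{\frak a}^i(M)=0$ for $i>0$; since $R$ is local, every finite length module is an iterated extension of copies of $R/\frak m$, so it lies in $\Ss$ (using $R/\frak m\in\Ss$). For the inductive step I would first reduce to the case $\Gamma_{\frak a}(M)=0$: from $0\To\Gamma_{\frak a}(M)\To M\To M/\Gamma_{\frak a}(M)\To 0$ and the vanishing of the higher local cohomology of the $\frak a$-torsion module $\Gamma_{\frak a}(M)$ one gets $H_{\frak a}^n(M)\cong H_{\frak a}^n(M/\Gamma_{\frak a}(M))$ for $n\geq 1$, so we may replace $M$ by $M/\Gamma_{\frak a}(M)$ and assume $\Gamma_{\frak a}(M)=0$. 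Then $\frak a\not\subseteq\frak p$ for every $\frak p\in\Ass M$, and prime avoidance yields an element $x\in\frak a$ that is a nonzerodivisor on $M$.

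Now apply $H_{\frak a}^\bullet(-)$ to $0\To M\st{x\cdot}\To M\To M/xM\To 0$. Since $x$ is $M$-regular and $x\in\frak m$, we have $\dim M/xM=n-1$, so Grothendieck vanishing gives $H_{\frak a}^n(M/xM)=0$; the long exact sequence then shows that $(0:_{H_{\frak a}^n(M)}x)$ is a homomorphic image of $H_{\frak a}^{n-1}(M/xM)$. By the induction hypothesis applied to the $(n-1)$-dimensional module $M/xM$, its top local cohomology $H_{\frak a}^{n-1}(M/xM)$ lies in $\Ss$, so $(0:_{H_{\frak a}^n(M)}x)\in\Ss$ because $\Ss$ is closed under quotients. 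Finally, since $x\in\frak a$ we have $(0:_{H_{\frak a}^n(M)}\frak a)\subseteq(0:_{H_{\frak a}^n(M)}x)$, and closure under submodules gives $(0:_{H_{\frak a}^n(M)}\frak a)\in\Ss$; invoking $C_{\frak a}$ condition completes the induction.

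The routine ingredients are the dimension drop $\dim M/xM=n-1$, Grothendieck vanishing, and the isomorphism after killing $\Gamma_{\frak a}(M)$; the elegant point is that one never needs the $\frak a$-socle directly, only the $x$-socle, which is controlled by the lower-dimensional top cohomology. The one genuinely delicate point is the base case: it relies on $R/\frak m\in\Ss$, equivalently $\Ss\neq 0$. This caveat is not vacuous, for if $\Ss=0$ then $C_{\frak a}$ condition is still satisfied yet $H_{\frak a}^n(M)$ can be nonzero, so the statement must be read for nonzero $\Ss$ (over which finite length modules automatically lie in $\Ss$). Granting this, the argument is self-contained and uses only the Serre property together with $C_{\frak a}$ condition, and I expect the main obstacle in a fully rigorous write-up to be precisely the bookkeeping that promotes membership of the $x$-socle to membership of the $\frak a$-socle, handled cleanly by the inclusion $(0:_{H_{\frak a}^n(M)}\frak a)\subseteq(0:_{H_{\frak a}^n(M)}x)$.
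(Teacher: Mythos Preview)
Your argument is essentially the paper's own: induction on $n=\dim M$, base case via finite length, reduction to $\Gamma_{\frak a}(M)=0$, choice of $x\in\frak a$ regular on $M$, and then the passage from $(0:_{H_{\frak a}^n(M)}x)\in\Ss$ to $(0:_{H_{\frak a}^n(M)}\frak a)\in\Ss$ followed by $C_{\frak a}$. The paper in fact says little more than ``similar proof as in Proposition~\ref{3.1}'', so your write-up is strictly more detailed. Your remark that the base case tacitly requires $R/\frak m\in\Ss$ (equivalently $\Ss\neq 0$) is a genuine observation the paper leaves implicit; with that caveat, the two proofs coincide.
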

\begin{proof}
We proceed by induction on $n$. if $n=0$, then $\Gamma_{\frak a}(M)$
is of finite length and so there is nothing to prove in this case.
Let $n>0$ and we may assume that $\Gamma_{\frak a}(M)=0$. Now the
result follows by using a similar proof that mentioned in
Proposition \ref{3.1}.
\end{proof}

\medskip
For an $R$-module $M$, the {\it cohomological dimension} of M with
respect to an ideal $\frak a$ is defined as ${\rm cd}(\frak a,M)
:={\rm max}\{i\in\mathbb{Z}: H^i_{\frak a}(M)\neq 0\}.$

 The following result show that some quotients of top local cohomology modules may be
belong to  Serre subcategories.

\medskip
\begin{proposition}
Let $(R,\frak m)$ be a local ring, let $\frak a$ be an ideal of $R$,
let $\Ss$ be a Seree subcategory, and let $M$ be a finitely
generated $R$-module with $c(\frak a, M)=n$. Then $H_{\frak
a}^n(M)/\frak m H_{\frak a}^n(M)\in\Ss$.
\end{proposition}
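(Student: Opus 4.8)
The plan is to prove the sharper statement that $H_{\frak a}^n(M)/\frak m H_{\frak a}^n(M)=0$; since the zero module lies in every Serre subcategory, this gives the assertion for the given $\Ss$. We may assume $\frak a\subseteq\frak m$ (otherwise $\frak a=R$ and all the $H_{\frak a}^i(M)$ vanish, so no such $n$ exists) and, since we are dealing with a top local cohomology module, that $n\geq 1$.

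Fix generators $x_1,\dots,x_s$ of $\frak m$ and let $\psi:M^s\To M$ be the $R$-homomorphism $(m_1,\dots,m_s)\mapsto\sum_{i=1}^s x_im_i$, so that $\im\psi=\frak m M$ and $\Coker\psi=M/\frak m M$. Two vanishing statements drive the argument. First, $M/\frak m M$ is annihilated by $\frak m\supseteq\frak a$, hence is $\frak a$-torsion, so $H_{\frak a}^i(M/\frak m M)=0$ for every $i\geq 1$; in particular $H_{\frak a}^n(M/\frak m M)=0$. Second, writing $K=\Ker\psi$, the module $K$ is finitely generated with $\Supp K\subseteq\Supp M$, so ${\rm cd}(\frak a,K)\leq{\rm cd}(\frak a,M)=n$ and therefore $H_{\frak a}^{n+1}(K)=0$.

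Now I would run the two short exact sequences $0\To K\To M^s\st{\psi}\To\frak m M\To 0$ and $0\To\frak m M\To M\To M/\frak m M\To 0$ through the local cohomology long exact sequence. The first, together with $H_{\frak a}^{n+1}(K)=0$, shows that $H_{\frak a}^n(M^s)\To H_{\frak a}^n(\frak m M)$ is surjective; the second, together with $H_{\frak a}^n(M/\frak m M)=0$, shows that $H_{\frak a}^n(\frak m M)\To H_{\frak a}^n(M)$ is surjective. Composing, the map $H_{\frak a}^n(\psi):H_{\frak a}^n(M)^s\To H_{\frak a}^n(M)$ is surjective. Because $H_{\frak a}^n(-)$ is additive and $R$-linear, this map sends $(\xi_1,\dots,\xi_s)$ to $\sum_{i=1}^s x_i\xi_i$, so its image is precisely $\sum_{i=1}^s x_iH_{\frak a}^n(M)=\frak m H_{\frak a}^n(M)$. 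Surjectivity thus forces $\frak m H_{\frak a}^n(M)=H_{\frak a}^n(M)$, i.e. $H_{\frak a}^n(M)/\frak m H_{\frak a}^n(M)=0$.

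The genuinely non-formal step, which I expect to be the main obstacle, is the second vanishing $H_{\frak a}^{n+1}(K)=0$: it uses that the cohomological dimension of a finitely generated module is controlled by its support, so that passing to the submodule $K$ of $M^s$ cannot push $H_{\frak a}^{\bullet}$ above degree $n$. I would isolate this as the statement that ${\rm cd}(\frak a,N)\leq{\rm cd}(\frak a,M)$ for finitely generated $N$ with $\Supp N\subseteq\Supp M$ and cite it as the standard input; granting it, the remainder is the formal diagram chase above.
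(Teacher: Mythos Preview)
Your argument for $n\geq 1$ is correct and genuinely different from the paper's. The paper proceeds by induction on $n$: after reducing to $\Gamma_{\frak a}(M)=0$ it picks an $M$-regular element $x\in\frak a$, uses the short exact sequence $0\to M\stackrel{x}\to M\to M/xM\to 0$, invokes [DNT] to control ${\rm cd}(\frak a,M/xM)$, and descends. You instead prove the sharper fact that $H_{\frak a}^n(M)=\frak m H_{\frak a}^n(M)$ outright, by pushing the presentation $M^s\to M\to M/\frak m M\to 0$ through $H_{\frak a}^n(-)$ and using the same support-bound on cohomological dimension (applied to $K=\Ker\psi$ rather than to $M/xM$). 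Your route avoids the induction and yields the vanishing of the quotient rather than merely its membership in $\Ss$; the paper's route is more elementary in that it never looks above degree $n$ (you need $H_{\frak a}^{n+1}(K)=0$), but both ultimately rest on the [DNT] inequality ${\rm cd}(\frak a,N)\leq{\rm cd}(\frak a,M)$ for $\Supp N\subseteq\Supp M$.

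There is, however, a real gap at $n=0$. Your sentence ``since we are dealing with a top local cohomology module, that $n\geq 1$'' is not a valid reduction: ${\rm cd}(\frak a,M)=0$ is perfectly possible (e.g.\ $M=R/\frak m$, $\frak a=\frak m$), and in that case your sharpened claim is simply false, since $\Gamma_{\frak a}(M)/\frak m\Gamma_{\frak a}(M)$ can be a nonzero $R/\frak m$-vector space. So the case $n=0$ must be treated separately. The paper handles it by observing that $\Gamma_{\frak a}(M)/\frak m\Gamma_{\frak a}(M)$ has finite length; over the local ring $(R,\frak m)$ every nonzero Serre subcategory contains $R/\frak m$ and hence all finite-length modules, which disposes of this case (the zero Serre subcategory being a degenerate exception that the statement tacitly excludes). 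You should add this one-line argument for $n=0$ rather than asserting a reduction that does not hold.
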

\begin{proof}
we proceed by induction on $n$. If $n=0$, the module $\Gamma_{\frak
a}(M)/\frak m\Gamma_{\frak a}(M)$ is of finite length and so there
is nothing to prove in this case. Let $n>0$ and we may assume that
$\Gamma_{\frak a}(M)=0$ and so there exists an element $x\in\frak
a\setminus Z(M)$ and an exact sequence $0\To M\stackrel{x.}\To M\to
M/xM\To 0$. It follows from [DNT] that $c(\frak a, M/xM)\leq c(\frak
a, M)$ and so there is the following exact sequence
 $$H_{\frak
a}^{n-1}(M/xM)\To H_{\frak a}^n(M)\stackrel{x.}\To H_{\frak
a}^n(M)\To 0\hspace{0.1cm}(\ast).$$ As each element of $H_{\frak
a}^{n-1}(M)$ is $x$-torsion, the equality $H_{\frak
a}^{n-1}(M/xM)=0$ implies that $H_{\frak a}^n(M)=0$  which is a
contradiction. Therefore $c(\frak a, M/xM)=n-1$. Now, using the
inductive hypothesis, we conclude that $H_{\frak
a}^{n-1}(M/xM)/\frak m H_{\frak a}^{n-1}(M/xM)\in\Ss.$ Applying the
functor $R/\frak m\otimes_R-$ to the exact sequence $(\ast)$, we
have $R/\frak m\otimes_R (0:_{H_{\frak a}^n(M)}\frak a)\in\Ss$,
moreover there is an epimorphism $R/\frak m\otimes_R (0:_{H_{\frak
a}^n(M)}\frak a)\twoheadrightarrow H_{\frak a}^n(M)/\frak m H_{\frak
a}^n(M)$ which completes the proof.
\end{proof}



\begin{thebibliography}{9999}
\bibitem[AM]{AM} M. Aghapournahr and L. Melkersson, {\em Local Cohomology and Serre subcategories}, J. Algebra, {\bf 320}(2008), 1275-1287.

\bibitem[BN]{BN} K. Bahmanpour and R. Naghipour, {\em On the cofiniteness of local
cohomology modules}, Proc. Amer. Math. Soc, {\bf
136}(2008),2359-2363.


\bibitem[BFT]{BFT} M. Brodmann, S. Fumasoli and R. Tajarod, {\em Local cohomology over
homogeneous rings with one-dimensional local base ring}, Proc. Amer.
Math. Soc, {\bf 131}(2003), 2977 - 2985.

\bibitem[BRS]{BRS} M. Brodmann, F. Rohrer and R. Sazeedeh,
{\em Multiplicities of graded components of local cohomology
modules,} J. Pure Appl. Algebra, {\bf 197}(2005), 249-278.

\bibitem[DNT]{DNT} K. Divaani-Aazar, R. Naghipour, AND M. Tousi, {\em Cohomological dimension of certain algebraic
varieties}, Proc. Amer. Math. Soc, {\bf 130}(2002),3537-3544.

\bibitem[M]{M} L. Melkersson, {\rm On asymptotic stability for sets
of prime ideals connected with the powers of an ideal}, Math. Proc.
Cambridge Philos. Soc. {\bf 107}(1990), 267-271.

 \bibitem[S]{S} R. Sazeedeh,  {\em Artinianess of graded local
cohomology modules}, Proc. Amer. Math. Soc, {\bf 135}(2007),
2339-2345.

\bibitem[St]{St} B. Stenstr\"{o}m, {\em Rings of quotients}, Die Grundlehern der Mathmatischen Wissenschaften, vol. {\bf 217},
 Springer-Verlag 1975.

 \bibitem[Y]{Y} T.Yoshizawa, {\em An example of Melkersson
subcategory which is not closed under injective hulles},
arXiv:1011.1663v2 [math.AC] 9 Nov 2010.

\end{thebibliography}
\end{document}